\documentclass{article}
    \usepackage{amsthm}
    \usepackage{amsmath}
    \usepackage{mathtools}
    \usepackage{float}
    \usepackage[english]{babel}
    \usepackage[autostyle]{csquotes}
    \usepackage[left=3cm, right=3cm, top=2cm, bottom=2cm]{geometry}
    \DeclareMathOperator{\sgn}{sgn}
    \newtheorem{definition}{Definition}
    \newtheorem{proposition}{Proposition}
    \newtheorem{theorem}{Theorem}
    \newtheorem{lemma}{Lemma}
    \newtheorem{remark}{Remark}
    \newtheorem{Example}{Example}
    \begin{document}
    \title{Besant Quadrilaterals}
    \author{Alan Horwitz}
    \date{5/6/26}
    \maketitle
    \begin{abstract}We solve the following problem of W.H. Besant using a formula for the coefficients of an ellipse inscribed in a quadrilateral, $Q$: \enquote{If an ellipse be inscribed in a quadrilateral so that one focus is equidistant from the four vertices(call that point $EP$), the other focus must be at the intersection of the diagonals(call that point $IP$).} We also prove somewhat more than just solving Besant's problem itself, though it would be nice to see the details of the geometric approach proposed by Besant. More precisely, we also prove the converse result and additional results when $Q$ is a trapezoid. Finally, we show that such an inscribed ellipse exists if and only if $Q$ is orthodiagonal.\end{abstract}
    \section{Introduction}
    This paper discusses the following problem proposed by W.H. Besant(it is listed as number 102 on page 86 of \cite{ref1}): \enquote{If an ellipse be inscribed in a quadrilateral so that one focus is equidistant from the four vertices, the other focus must be at the intersection of the diagonals.} Besant outlines a solution(see \cite{ref2}, page 39, solution number 104), but doesn't fill in the details. This author is grateful to Professor Mordechai Ben-Ari of the Weizmann Institute of Science in Israel for making me aware of this very interesting problem. Here we use a different approach based on this author's work(see, for example, \cite{ref5} and \cite{ref6}). We also prove somewhat more than just solving problem number 102 itself, though it would be nice to see the details of the geometric approach proposed by Besant. First we make the following definition.
    \begin{definition}A cyclic quadrilateral, $Q$, is called a \textbf{Besant quadrilateral} if there is an ellipse, $E_0$, inscribed in $Q$ such that one focus of $E_0$ is equidistant from the four vertices of $Q$. Such an ellipse, $E_0$, is called a \textbf{Besant ellipse} for $Q$. We also sometimes say that $E_0$ corresponds to(or is associated with) $Q$.\end{definition}
    Before proceeding, we introduce the following notation: For any convex quadrilateral, $Q$, let $IP=$ the intersection point of the diagonals of $Q$. If $Q$ is also a cyclic quadrilateral, let $EP=$the center of the circle passing though the vertices of $Q$.
    \begin{figure}[H]\includegraphics{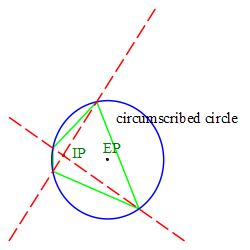}\end{figure}
    It is natural to ask whether there are any Besant quadrilaterals, $Q$. Indeed, even if Besant's problem holds true, it might be vacuously so. Since the definition above(and Besant's original problem) assumes that the inscribed ellipse has a focus at a point which is equidistant from the four vertices of $Q$, such a point must exist. But clearly if such a point exists, it must be the center of a circle passing though the vertices of $Q$--that is, the point must equal $EP$ and $Q$ must be cyclic. So throughout we restrict our attention to cyclic quadrilaterals. It turns out that there are cyclic quadrilaterals which are Besant, but not every cyclic quadrilateral is Besant. Indeed we show(Theorem \ref{Besant} below) that a cyclic quadrilateral, $Q$ is a Besant quadrilateral if and only if $Q$ is an orthodiagonal quadrilateral. Thus we have a characterization of cyclic, orthodiagonal quadrilaterals in terms of inscribed ellipses. It is also natural to ask about the converse: What if one assumes that one focus of the inscribed ellipse is at the intersection point of the diagonals ? Must the other focus be at the point equidistant from the four vertices ? We answer that in Theorem \ref{Besant}(i) below. Below is a Besant quadrilateral along with the corresponding Besant ellipse(see Example \ref{EX} below): \begin{figure}[H]\includegraphics{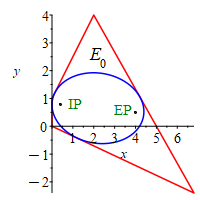}\label{Figure1}\end{figure}
    It is natural to ask what Besant's problem number 102 means if $EP$ equals $IP$, and also which cyclic quadrilaterals have this property. Now $E_0$ is a Besant ellipse for a cyclic quadrilateral, $Q$, if one focus of $E_0$ equals $EP$. But if $IP=EP$, then the \enquote{two} foci of $E_0$ coincide and thus $E_0$ is a circle. So Besant's problem makes sense when $IP=EP$ if we allow the Besant ellipse to be a circle inscribed in $Q$ and with center equal to $IP=EP$. However, we only consider the case when $IP \neq EP$. Towards that end, we have the following result, which we prove later in § Appendix along with some related results.
    \begin{proposition}\label{IPNEQEP} Suppose that $Q$ is a cyclic quadrilateral which is not a parallelogram. Then $IP \neq EP$.\end{proposition}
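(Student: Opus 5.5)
We argue by contrapositive: assuming $IP=EP$, we show that $Q$ must be a parallelogram (indeed a rectangle). The key observation is that if the intersection point of the diagonals coincides with the circumcenter, then each diagonal is a chord of the circumscribed circle passing through its center, that is, each diagonal is a diameter.

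Label the vertices of $Q$ in order as $A,B,C,D$, so that the diagonals are $AC$ and $BD$. Let $O=EP$ be the center of the circle through the four vertices and $r$ its radius. Suppose $IP=O$. Since $IP$ lies on segment $AC$ and $|OA|=|OC|=r$, the point $O$ lies on the chord $AC$ at equal distance from both endpoints, so $O$ is the midpoint of $AC$. Concretely, writing $O=tA+(1-t)C$ with $0<t<1$ (by convexity $IP$ is interior to both diagonals), the equality $|OA|=|OC|$ gives $(1-t)|AC|=t|AC|$, hence $t=1/2$. The same argument applied to $BD$ shows that $O$ is also the midpoint of $BD$.

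Thus the diagonals of $Q$ bisect each other. A convex quadrilateral whose diagonals bisect each other is a parallelogram: with $O$ as origin we have $C=-A$ and $D=-B$, so
\[
B-A=-(D-C)\quad\text{and}\quad C-B=-(A-D),
\]
which says that opposite sides are parallel and of equal length. This contradicts the hypothesis that $Q$ is not a parallelogram, and therefore $IP\neq EP$. (In fact $Q$ would even be a rectangle, since both diagonals are diameters of length $2r$, but this is not needed.)

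There is no genuine obstacle in this argument. The only point needing care is that $IP$ lies strictly inside both diagonals, which follows from the convexity of $Q$; this is what justifies writing $O$ as a convex combination of the endpoints.
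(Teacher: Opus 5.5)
Your proof is correct. Once $IP=EP$, the point $IP$ lies on each diagonal and is equidistant from that diagonal's endpoints, so it is the common midpoint of both diagonals and $Q$ is a parallelogram (indeed a rectangle). You do not even need convexity for the midpoint step, since the perpendicular bisector of $AC$ meets the line $AC$ only at the midpoint.

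There is, however, no proof in the paper to compare with. The paper says the proposition is proved in the Appendix, but the Appendix contains only the proof of Proposition \ref{orthomp}. The paper's machinery would suggest a coordinate check on $Q_{s,t,v,w}$:
by (\ref{IP}) and (\ref{EP}), equal second coordinates give $N+s=2vt$, i.e.\ $w=1-\frac{vt}{s}$;
equal first coordinates then reduce to $(1-t)(s^2+t^2)=0$, so $t=1$;
and then (\ref{cyc}) becomes $CYC=v(s^2+1)\left(1-\frac{v}{s}\right)$, so cyclicity forces $v=s$, $w=0$.
That is a rectangle, contradicting the standing assumption $s\neq v$. The trapezoid form $Q_{s,t,s,1-t}$ would have to be checked separately: there $IP=\left(\frac{s}{2t},\frac12\right)=EP$ forces the excluded value $t=1$.

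Compared with that, your synthetic route needs no isometry and scaling normalization and no trapezoid/non-trapezoid split. It also uses only the defining property of $EP$ (equidistance from all four vertices) rather than its explicit formula. The coordinate route would only have the merit of matching the computational style of the rest of the paper.
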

    What if $Q$ is a parallelogram ? It follows easily that $IP=EP$ if $Q$ is a rectangle, and a parallelogram is cyclic if and only if it is a rectangle. Thus if $Q$ is a cyclic quadrilateral which is a parallelogram, then $IP=EP$. Conversely, if $IP=EP$, then $Q$ is a parallelogram by Proposition \ref{IPNEQEP}. Thus throughout, we assume the following:
    \newline • a Besant quadrilateral is not a rectangle, and hence not a parallelogram.
    \newline • a Besant ellipse is not a circle.
    Another question to ask is about uniqueness, which we answer in the following result.
    \begin{proposition}\label{BEUN}If $Q$ is a Besant quadrilateral, then a Besant ellipse for $Q$ must be unique.\end{proposition}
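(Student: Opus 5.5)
The plan is to use the classical fact that an ellipse inscribed in a convex quadrilateral (or, more generally, tangent to four given lines in general position) is completely determined by one of its foci. The key tool is the isogonal-conjugate property of foci. If a conic is tangent to the two sides of an angle, then its two foci are isogonal with respect to that angle: the lines from the vertex to the two foci are symmetric about the angle bisector.

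Now suppose $E_0$ and $E_1$ are both Besant ellipses for $Q$. By definition each has one focus equidistant from the four vertices of $Q$. As noted before the statement, such a point must be $EP$. Hence $E_0$ and $E_1$ share the focus $EP$, and we want to show they also share their second focus.

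To see this, let $F_0$ and $F_1$ be the second foci. Apply the isogonal property at two adjacent vertices, say $A$ and $B$ of $Q=ABCD$.

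At $A$, both $F_0$ and $F_1$ lie on the reflection, across the bisector of angle $A$, of the line $AEP$. Call this line $\ell_A$.

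Likewise, both $F_0$ and $F_1$ lie on the line $\ell_B$ obtained by reflecting $BEP$ across the bisector of angle $B$.

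If $\ell_A$ and $\ell_B$ are distinct lines, they meet in at most one point, so $F_0=F_1$. An ellipse is determined by its two foci together with one tangent line: the sum of focal distances equals the distance from one focus to the reflection of the other across the tangent line. Therefore $E_0=E_1$.

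The main obstacle is the degenerate case $\ell_A=\ell_B$. Both lines pass through the interior of $Q$, since the second focus lies inside the ellipse and hence inside $Q$. So $\ell_A=\ell_B$ would force this common line to be the line $AB$, which is impossible for a line through an interior point.

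A cleaner way to handle this is to use all four vertices. The isogonal lines $\ell_A,\ell_B,\ell_C,\ell_D$ all pass through $F_0$. If they also all passed through $F_1\neq F_0$, they would all coincide with the single line $F_0F_1$. But $\ell_A$ passes through $A$ and $\ell_C$ passes through $C$, so that line would contain both $A$ and $C$, and similarly both $B$ and $D$. That contradicts convexity of $Q$, since the diagonals $AC$ and $BD$ are distinct lines.

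Alternatively, one could phrase the argument through the paper's coefficient formula for inscribed ellipses, in which the inscribed ellipses form a one-parameter family. One would then show that the location of a focus is an injective function of the parameter. I would nevertheless follow the synthetic isogonal argument above, checking the tangent-length details only where needed.
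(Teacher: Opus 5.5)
Your proof is correct, and it takes a genuinely different route from the paper's.

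The paper argues in two steps. First it invokes Theorem~\ref{Besant}(i), which is proved only later via Marden's theorem and the coefficient formulas of Proposition~\ref{ins}, to conclude that every Besant ellipse has foci exactly $EP$ and $IP$, hence center $\frac{1}{2}(EP+IP)$. Then it applies Chakerian's result, Proposition~\ref{P2}, together with the standing assumption that $Q$ is not a parallelogram, to conclude that two inscribed ellipses with a common center coincide.

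You instead use the classical isogonal property of the foci at each vertex to pin the second focus to $\ell_A\cap\ell_B$. You correctly rule out $\ell_A=\ell_B$, since that line would be the side line $AB$, which misses the interior of $Q$. You then finish with the fact that two foci and one tangent line determine the ellipse.

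Your route is elementary and self-contained. It avoids the forward reference to Theorem~\ref{Besant}, never needs to identify the second focus, and does not use the parallelogram or non-circle assumptions. It actually proves the stronger fact that an ellipse inscribed in any convex quadrilateral is determined by either one of its foci. The paper's route costs little given its machinery, and it names the common second focus ($IP$) explicitly. Note, however, that once both ellipses are known to have foci $EP$ and $IP$, your final step alone already finishes, so Proposition~\ref{P2} is more than is needed there.

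Your idea also goes further. Since $EP$ is the circumcenter of triangles $ABD$ and $CBD$, the isogonal of line $A\,EP$ at $A$ is the perpendicular from $A$ to $BD$, and likewise at $C$. These parallel lines must share the second focus, hence coincide, forcing $AC\perp BD$ with the focus on $AC$. The same reasoning at $B$ and $D$ puts the focus on $BD$ as well, so it is $IP$. This gives a short synthetic proof of the first part of Theorem~\ref{Besant}(i).
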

    To prove this, we use the following result, which was stated by Chakerian in \cite{ref4}, but no proof is cited or given. We gave a full proof in \cite{ref5}.
    \begin{proposition}\label{P2}Suppose that $E_1$ and $E_2$ are distinct ellipses with the same center and which are each inscribed in a convex quadrilateral, $Q$. Then $Q$ must be a parallelogram.\end{proposition}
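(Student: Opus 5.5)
The idea is to use the point symmetry shared by the two ellipses, together with the fact that two distinct ellipses have few common tangent lines. Let $O$ be the common center of $E_1$ and $E_2$, and let $\sigma$ be the point reflection $x \mapsto 2O-x$. Each ellipse is centrally symmetric about its center, so $\sigma(E_1)=E_1$ and $\sigma(E_2)=E_2$. Consequently, $\sigma$ maps any line tangent to both $E_1$ and $E_2$ to another line tangent to both.

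The first step is to show that $E_1$ and $E_2$ have at most four common tangent lines. I would argue by projective duality. The tangent lines of a nondegenerate conic form a nondegenerate conic in the dual plane: if the conic is $x^TAx=0$, its tangent lines are the $u$ with $u^TA^{-1}u=0$. Since $E_1\neq E_2$, their dual conics are distinct. By B\'ezout's theorem, or directly by eliminating a variable and getting a quartic, two distinct nondegenerate conics meet in at most four points. Hence there are at most four common tangents. I expect this to be the main obstacle: the elimination has to be done carefully so that no degenerate case produces infinitely many common tangents. That cannot happen, because two distinct irreducible conics share no component.

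Next, use the sides of $Q$. Since $Q$ is convex, its four side lines are distinct, and each is tangent to both $E_1$ and $E_2$ because both are inscribed in $Q$. By the first step, these four lines are exactly the common tangents. Therefore $\sigma$ permutes the set of the four side lines of $Q$.

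Now determine how $\sigma$ acts on that set. A line is mapped to itself by $\sigma$ exactly when it passes through $O$. A tangent line of an ellipse never passes through its center, since the center lies in the interior of the ellipse. So $\sigma$ fixes none of the four lines, and being an involution, it splits them into two pairs $\{L,\sigma(L)\}$. Each pair consists of two distinct parallel lines, because a point reflection maps every line to a parallel one.

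Finally, identify the pairs with opposite sides. Adjacent sides of a convex quadrilateral meet at a vertex, so they are not parallel and cannot form a pair. Hence each pair consists of opposite sides. Both pairs of opposite sides of $Q$ are therefore parallel, which means $Q$ is a parallelogram, as Proposition \ref{P2} asserts.
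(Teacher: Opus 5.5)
Your proof is correct and complete. It also takes a different route from anything in the paper: the paper does not prove Proposition \ref{P2} itself but defers to \cite{ref5}, so I am comparing your argument with the proof that the paper's own tools make available.

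That available proof is computational. By Proposition \ref{ins}, the ellipses inscribed in $Q_{s,t,v,w}$ correspond bijectively to $r\in J$. The center has $x_0=\frac{sv}{2\tau}$ with $\tau=(s-v)r+v>0$ on $[0,1]$, and this is strictly monotone in $r$ when $s\neq v$. In the trapezoid normalization $v=s$ one gets instead $x_0=\frac{s}{2}$ and $y_0=\frac{(1-t+w)r+t}{2}$. This is injective unless $t-w=1$, i.e.\ unless $Q$ is a parallelogram. That route gives a stronger conclusion: for a non-parallelogram, the center determines the inscribed ellipse. But it rests on the full parametrization of inscribed ellipses and on a case split into trapezoids and non-trapezoids.

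Your argument uses only two ingredients:
\begin{itemize}
\item the central symmetry of each ellipse;
\item the fact that two distinct nondegenerate conics have at most four common tangents.
\end{itemize}
You justify the second correctly by applying B\'ezout to the dual conics: non-proportional matrices give distinct irreducible dual conics with no common component. Your proof is coordinate-free and needs no normalization. It uses nothing about $Q$ beyond the facts that its four side lines are distinct, tangent to both ellipses, and that adjacent ones meet.
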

    (proof of Proposition \ref{BEUN}): Suppose that $Q$ is a Besant quadrilateral. Suppose that $E_1$ and $E_2$ are each Besant ellipses for $Q$. Then $E_1$ and $E_2$ have foci $EP$ and $IP$, which implies that $E_1$ and $E_2$ have the same center, $\frac{1}{2}(EP+IP)$. By Proposition \ref{P2}, $E_1$=$E_2$.\newline
    \section{Main Result}
    \textbf{Notation}: (i) For any quadrilateral, $Q$, we let $Q_{M}=$ quadrilateral with vertices at the midpoints of $Q$.\newline
    (ii) $Q(A_1,A_2,A_3,A_4)$ denotes the quadrilateral with vertices $A_1,A_2,A_3$, and $A_4$, starting with $A_1$ and going clockwise.\newline
    \begin{figure}[H]\includegraphics{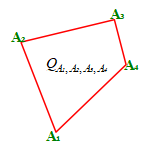}\end{figure}
    \begin{theorem}\label{Besant}Let $Q$ be a cyclic quadrilateral which is not a parallelogram.\newline
    (i) Let $E_0$ be an ellipse inscribed in $Q$ which is not a circle. If one focus of $E_0$ equals EP or IP, then the other focus of $E_0$ equals $IP$ or $EP$. In addition, $Q$ must be an orthodiagonal quadrilateral(thus $Q_{M}$ is cyclic) and the center of $E_0$ equals the center of the circle inscribed in $Q_{M}$.\newline
    (ii) Suppose that $Q$ is also an orthodiagonal quadrilateral. Then there exists an ellipse, $E_0$, inscribed in $Q$ whose foci are at $EP$ and at $IP$.\newline
    (iii) If $Q$ is also a trapezoid, then $Q$ is a Besant quadrilateral if and only if $Q$ is isoceles. In addition, the corresponding Besant ellipse is the ellipse of maximal area inscribed in $Q$.
    \end{theorem}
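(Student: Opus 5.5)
The plan is to replace the paper's coefficient formula by the classical focal characterization of inscribed conics. Two points $F_1,F_2$ in the interior of a convex quadrilateral $Q$ are the foci of an ellipse inscribed in $Q$ if and only if they are isogonal conjugates at every vertex of $Q$. That is, at each vertex $V$, the lines $VF_1$ and $VF_2$ are symmetric about the bisector of the angle of $Q$ at $V$. Equivalently, the product of the distances from $F_1$ and $F_2$ to each side line is the same constant $b^2$ for all four sides. I would quote this as standard, or derive it quickly from the reflection property of the tangent lines. Write $Q=Q(A,B,C,D)$ with $O=EP$ and $P=IP$.

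The key angle fact is the following. At vertex $A$, consider triangle $ABD$, whose circumcenter is $O$. The lines $AO$ and the altitude from $A$ to $BD$ are isogonal in the angle $BAD$. Hence the isogonal of $AO$ at $A$ is the line through $A$ perpendicular to $BD$, and similarly at $C$.

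\textbf{(i), focus at $EP$.} Suppose one focus is $O$. Then the other focus $F$ lies on the perpendicular to $BD$ through $A$ and on the perpendicular to $BD$ through $C$. These two lines are parallel, so they must coincide. Hence $AC\perp BD$ and $F\in AC$. The same argument at $B$ and $D$ gives $F\in BD$, so $F=P$ and $Q$ is orthodiagonal.

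\textbf{(i), focus at $IP$.} Suppose one focus is $P$, and let $F$ be the other focus. Then $F$ lies on the isogonal of line $AC$ at $A$ and on the isogonal of line $AC$ at $C$. I would compute these directions by inscribed angles. The isogonal of $AC$ at $A$ makes with $AD$ the angle $\angle BAC=\angle BDC$. The isogonal at $C$ makes with $CD$ the angle $\angle ACB=\angle ADB$. I would then show that these two lines meet on the perpendicular bisector of $AC$. Doing the same at $B$ and $D$ puts $F$ on the perpendicular bisector of $BD$, so $F=O$. Orthodiagonality then follows from the previous paragraph applied with the roles of the foci swapped. Proposition \ref{IPNEQEP} guarantees $O\neq P$, so the ellipse is genuinely not a circle.

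\textbf{Center of $E_0$.} Put $P$ at the origin and the diagonals along the axes, with $A=(a,0)$, $C=(c,0)$, $B=(0,b)$, $D=(0,d)$. Then $O=\bigl(\tfrac{a+c}{2},\tfrac{b+d}{2}\bigr)$, so the center $\tfrac12(O+P)$ equals the vertex centroid $\tfrac14(A+B+C+D)$. This is the center of $Q_M$, which is a rectangle because $Q$ is orthodiagonal, and hence cyclic. So the center of $E_0$ is the center of the circle associated with $Q_M$; I read the statement's ``circle inscribed in $Q_M$'' as meaning the circle through the vertices of $Q_M$.

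\textbf{(ii).} Assume $AC\perp BD$. The fact above shows directly that $O$ and $P$ are isogonal at each of the four vertices. The isogonal of $AO$ at $A$ is the perpendicular from $A$ to $BD$, which is the line $AC$ and passes through $P$; the same holds at $B$, $C$ and $D$. It remains to check that $O$ lies inside $Q$. The characterization then yields the inscribed ellipse, and I would confirm it using the equal-distance-product condition in the coordinates above.

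\textbf{(iii).} A cyclic trapezoid is automatically isosceles, so the content is really ``Besant iff orthodiagonal'' specialized to trapezoids, via (i) and (ii). For the maximal-area claim, I would invoke the author's earlier result that the maximal-area inscribed ellipse of a trapezoid has a specific center. I would then check that this center equals the vertex centroid when the diagonals are perpendicular. Uniqueness of an inscribed ellipse with a given center (Proposition \ref{P2}) then identifies the two ellipses.

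The main obstacle is the converse direction of (i), showing that the isogonals of the diagonals meet exactly at $O$. If the angle chase becomes messy, my fallback is the distance-product criterion in the coordinates above, solving for the second focus directly.
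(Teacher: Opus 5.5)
Your arguments for the $EP$ direction of (i), the center of $E_0$, part (ii), and the reduction in (iii) are sound. They take a genuinely different and much shorter route than the paper, which normalizes to $Q_{s,t,v,w}$, obtains the focal quadratic $p(z)$ from the coefficient formula and Marden's theorem, and factors large polynomials, treating trapezoids separately.

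\textbf{The gap} is in the $IP$ direction of (i). Your step ``the isogonal $\ell_A$ of $AC$ at $A$ and the isogonal $\ell_C$ of $CA$ at $C$ meet on the perpendicular bisector of $AC$'' is false unless $AC\perp BD$. So it cannot be proved from inscribed angles at $A$ and $C$ alone. Reflection in that bisector swaps $A$ and $C$, so the two lines meet on it iff they are mirror images. Put the vertices at angles $\theta_A,\dots,\theta_D$ on the circumcircle. Then $\ell_A$ is the chord from $A$ to the point at angle $\theta_B+\theta_D-\theta_C$, and the mirror condition reduces to $\theta_A+\theta_C\equiv\theta_B+\theta_D \pmod{\pi}$, i.e.\ perpendicular diagonals. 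Concretely, take $A=(1,0)$, $B=(0,1)$, $C=(-\tfrac12,\tfrac{\sqrt3}{2})$, $D=(-\tfrac12,-\tfrac{\sqrt3}{2})$. Then $\ell_A\colon y=(2-\sqrt3)(x-1)$ and $\ell_C\colon y=-x+\tfrac{\sqrt3-1}{2}$ meet at $(\tfrac12,\tfrac{\sqrt3-2}{2})$, which is not on the perpendicular bisector $y=\sqrt3\,x$. Your fallback does not rescue this either. Coordinates with $P$ at the origin and the diagonals on the axes already assume orthodiagonality, which is exactly what this direction has to establish.

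\textbf{The fix} is to use all four vertices at once, and your own key fact does it. Reflection in the bisector at $A$ swaps $AO$ with the perpendicular $h_A$ from $A$ to $BD$, and swaps $AC$ with $\ell_A$. Hence the angle between $\ell_A$ and $AO$ equals the angle $\epsilon$ between $AC$ and $h_A$, which is the amount by which the diagonals fail to be perpendicular. So $\ell_A$ is at distance $R\sin\epsilon$ from $O$. The same holds for $\ell_B,\ell_C,\ell_D$, so all four are tangent to the circle of radius $R\sin\epsilon$ about $O$, and the second focus $F$ lies on all of them. These lines are pairwise distinct when $\epsilon>0$:
\begin{itemize}
\item $\ell_A\ne\ell_B$, since otherwise the common line is $AB$, forcing line $AC$ to be line $AD$; likewise $\ell_B\ne\ell_C$.
\item $\ell_A=\ell_C$ would make $AC$ bisect the angles at $A$ and $C$, so $ABCD$ would be a kite with $AC\perp BD$.
\end{itemize}
If $\epsilon>0$, three distinct tangents to a circle of positive radius would pass through $F$, which is impossible. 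So $\epsilon=0$, each $\ell_X$ is the line $XO$, and $F=AO\cap BO=O$.

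Two smaller remarks. In (ii), $O$ is interior to $Q$ because every angle such as $\angle ACB$ is an acute angle of a triangle right-angled at $P$. In (iii), your observation that a cyclic trapezoid is always isosceles is correct, so the literal ``iff isosceles'' is vacuous. What the paper actually proves is Besant iff $s=t$, i.e.\ orthodiagonal, which matches your reading. Symmetry of the isosceles trapezoid, together with uniqueness of the maximal-area ellipse, puts that ellipse's center at the vertex centroid, and then Proposition~\ref{P2} applies as you planned.
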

    Theorem \ref{Besant}(i) and (ii) can be phrased another way: A cyclic quadrilateral, $Q$, is a Besant quadrilateral if and only if $Q$ is an orthodiagonal quadrilateral. Thus we have a characterization of cyclic, orthodiagonal quadrilaterals in terms of inscribed ellipses.\newline
    \textbf{Preliminaries}\newline
    Before proving Theorem \ref{Besant}, we look at some preliminary material. To prove all of the results below for convex quadrilaterals in the plane, we work with the following quadrilateral of a special form. Let $Q_{s,t,v,w}$ denote the convex quadrilateral with vertices $(0,0),(0,1),(s,t)$, and $(v,w)$, where \begin{equation}\label{R1}
    s,t,v>0, t \geq w \end{equation}
    \begin{figure}[H]\includegraphics{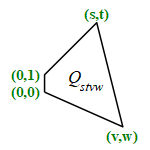}\end{figure}
    Let $Q$ be a convex quadrilateral. We say that $Q$ is a trapezoid if $Q$ has at least one pair of parallel sides. Assume throughout this section that all quadrilaterals are convex, but not trapezoids. We look at the trapezoid case later in \S \ref{Trapezoids}, where we prove some special properties of Besant trapezoids and the corresponding Besant ellipse.
    We find it useful to let \begin{equation}N=vt-ws\label{N}\end{equation}
    Since $Q_{s,t,v,w}$ is convex and not a trapezoid, it follows easily that \begin{equation}\label{R2}N+s-v>0,N>0,N \neq v,s \neq v\end{equation}
    The lines containing the sides of $Q_{s,t,v,w}$, going clockwise, are given by $S_1= \overleftrightarrow{(0,0) (0,1)}$,$S_2= \overleftrightarrow{(0,1) (s,t)}$,$S_3= \overleftrightarrow{(s,t) (v,w)}$, and $S_4= \overleftrightarrow{(0,0) (v,w)}$. The midpoints of the sides are $MP_1=(0,\frac{1}{2}) \in S_1$, $MP_2=(\frac{s}{2},\frac{1+t}{2}) \in S_2$, $MP_3=(\frac{s+v}{2},\frac{t+w}{2}) \in S_3$, and $MP_4=(\frac{v}{2},\frac{4}{2}) \in S_4$. The diagonals are $D_1 = \overline{(0,0) (s,t)}=$ diagonal from lower left to upper right and $D_2 = \overline{(0,1) (v,w)}=$diagonal from lower right to upper left, and the lines containing $D_1$ and $D_2$ have equations $y= \frac{t}{s}x$ and $y=1+\frac{w-1}{v}x$, respectively.
    The intersection point of the diagonals is given by \begin{equation}IP=\left(\frac{vs}{N+s},\frac{vt}{N+s}\right)\label{IP}\end{equation}
    Following are several polynomial expressions we use a lot below:
    \begin{gather}\beta=s^2+t^2\nonumber\\I=t^3-t^2+s^2(t+1)\nonumber\\H=sv \beta -tI\\\L=(v-s)\beta+2st\nonumber\label{HIL}\end{gather} and
    \begin{equation}CYC= \beta v-s(v^2+w^2)-N.\label{cyc}\end{equation}
    Note that $\sqrt \beta$= length of the diagonal $D_1$.
    First we state two results about when $Q_{s,t,v,w}$ is cyclic or orthodiagonal.\newline
    \begin{lemma}\label{cycorth}
    (i) $Q_{s,t,v,w}$ is cyclic $ \iff CYC=0$. Furthermore, the center of the circle passing thru the vertices of $Q_{s,t,v,w}$ is \begin{equation}EP=\left(\frac{\beta-t}{2s},\frac{1}{2}\right)\label{EP}\end{equation}\newline
    (ii) $Q_{s,t,v,w}$ is orthodiagonal $ \iff w=1-\frac{sv}{t}$.
    \end{lemma}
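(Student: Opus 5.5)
Both parts reduce to direct coordinate computations with the four vertices $(0,0)$, $(0,1)$, $(s,t)$, $(v,w)$.

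For (i), I will find $EP$ as the point $(x,y)$ equidistant from $(0,0)$, $(0,1)$ and $(s,t)$. These three vertices already determine the circumcircle when it exists.
\begin{itemize}
\item Equidistance from $(0,0)$ and $(0,1)$ gives $y=\frac{1}{2}$, since the perpendicular bisector of $S_1$ is horizontal.
\item Equating $x^2+y^2$ with $(x-s)^2+(y-t)^2$ gives $2sx+2ty=s^2+t^2=\beta$. With $y=\frac12$ this becomes $x=\frac{\beta-t}{2s}$, which is the formula \eqref{EP}; note $s>0$, so the division is legitimate.
\end{itemize}
The quadrilateral is cyclic exactly when $(v,w)$ lies on this circle, that is, when
\[
(v-x)^2+(w-\tfrac12)^2=x^2+\tfrac14 .
\]
This simplifies to $v^2+w^2-2vx-w=0$. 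Multiplying by $s$ and substituting $2sx=\beta-t$ gives
\[
s(v^2+w^2)-v(\beta-t)-sw=0 .
\]
Its left side is $-\bigl(\beta v - s(v^2+w^2) - (vt-ws)\bigr)=-CYC$, using $N=vt-ws$ from \eqref{N}. So the condition is exactly $CYC=0$, and each step is reversible.

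For (ii), the diagonal $D_1$ has direction vector $(s,t)$ and $D_2$ has direction vector $(v,w-1)$. The quadrilateral is orthodiagonal iff these are perpendicular, i.e.
\[
sv+t(w-1)=0 .
\]
Since $t>0$ by \eqref{R1}, this is equivalent to $w=1-\frac{sv}{t}$.

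The main obstacle is only the bookkeeping in (i): matching the simplified circle condition exactly with $CYC$ as defined in \eqref{cyc}, including the overall sign. No earlier result is needed beyond the definitions and the positivity assumptions \eqref{R1}, which guarantee $s\neq0$ and $t\neq0$.
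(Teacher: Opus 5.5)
Your proof is correct and takes essentially the paper's route: the paper writes the circle through $(0,0),(0,1),(s,t)$ as $x^2+y^2+\left(\frac{t}{s}-\frac{t^2}{s}-s\right)x-y=0$ and checks that its value at $(v,w)$ is $-\frac{CYC}{s}$, which is your condition divided by $s$. For (ii) the paper uses the slope condition $\frac{t}{s}\cdot\frac{w-1}{v}=-1$ where you use the dot product, and your perpendicular-bisector derivation makes explicit the formula for $EP$ that the paper leaves implicit in its circle equation.
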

    \begin{proof} For (i), the equation of the circle passing thru $(0,0),(0,1),(s,t)$ is $f(x,y)=x^2+y^2+\left(\frac{t}{s}-\frac{t^2}{s}-s\right)x-y$. Now $Q_{s,t,v,w}$ is cyclic $\iff f(v,w)=-\frac{CYC}{s}=0 \iff CYC=0.$ For (ii), $Q_{s,t,v,w}$ is orthodiagonal $\iff$ the diagonals are perpendicular $\iff \frac{t}{s}\frac{w-1}{v}=-1 \iff w=1-\frac{sv}{t}$.\end{proof}
    The following lemma gives a useful necessary condition for $Q_{s,t,v,w}$ to be cyclic and orthodiagonal.
    \begin{lemma}[orth]\label{orth} If $Q_{s,t,v,w}$ is cyclic and orthodiagonal, then $H=0$.\end{lemma}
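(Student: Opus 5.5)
The plan is a direct substitution. By Lemma \ref{cycorth}, the two hypotheses become algebraic conditions on $s,t,v,w$: cyclic means $CYC=0$, and orthodiagonal means $w=1-\frac{sv}{t}$. So I will use the orthodiagonality condition to eliminate $w$ from $CYC$, and expect the result to factor as a nonzero multiple of $H$.

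\textbf{Step 1: simplify $N$.} With $w=1-\frac{sv}{t}$, the quantity in (\ref{N}) becomes
\[
N=vt-ws=vt-s+\frac{s^2v}{t}.
\]

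\textbf{Step 2: substitute into $CYC$.} We have
\[
w^2=1-\frac{2sv}{t}+\frac{s^2v^2}{t^2}.
\]
Substituting this and $N$ into (\ref{cyc}), the constant terms $\mp s$ cancel, and the terms $\frac{2s^2v}{t}-\frac{s^2v}{t}$ combine to $\frac{s^2v}{t}$. This leaves
\[
CYC=\beta v-sv^2+\frac{s^2v}{t}-\frac{s^3v^2}{t^2}-vt .
\]

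\textbf{Step 3: clear denominators and factor.} Factor out $v$ and multiply by $t^2$:
\[
t^2\,CYC=v\left[\beta t^2+s^2t-t^3-sv(t^2+s^2)\right].
\]
Since $\beta=s^2+t^2$, the last term is $sv\beta$. For the remaining terms,
\[
\beta t^2+s^2t-t^3=t\left(t^3-t^2+s^2(t+1)\right)=tI .
\]
Hence
\[
t^2\,CYC=v(tI-sv\beta)=-vH .
\]

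\textbf{Step 4: conclude.} Since $Q_{s,t,v,w}$ is cyclic, $CYC=0$, so $vH=0$. By (\ref{R1}), $v>0$ and $t>0$, so $H=0$.

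The only real obstacle is the bookkeeping in Steps 2 and 3: seeing that the polynomial $I$ is exactly what is left after grouping. Once $t^2 CYC=-vH$ is in hand, the conclusion is immediate.
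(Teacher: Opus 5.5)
Your proof is correct and is the same argument as the paper's: substitute the orthodiagonality condition $w=1-\frac{sv}{t}$ into $CYC$ to obtain $CYC=-\frac{v}{t^2}H$, then conclude from $CYC=0$ and $v\neq 0$. Your Steps 2--3 supply the algebraic verification of that identity, which the paper states without details, and the algebra checks out.
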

    \begin{proof}By Proposition \ref{cycorth}(ii) , $w=1-\frac{sv}{t}$, which implies that $CYC=-\frac{v}{t^2}H$. Since $v \neq 0,H=0$
    by Proposition \ref{cycorth}(i).\end{proof}
    The following useful fact is used below several times: \begin{equation}H=0 \iff v=\frac{tI}{s \beta}\label{v}\end{equation}
    The following lemma allows us to assume that Q=$Q_{s,t,v,w}$ in the proofs below.
    \begin{lemma}\label{scaling}Let T be the scaling transformation given by $T(x,y)=(kx,ky),k\neq 0$, let Q be a convex quadrilateral, and let $E_0$ be an ellipse. \newline(i) Then $T(E_0)$ is also an ellipse.\newline(ii) If $F_1$ and $F_2$ are the foci of $E_0$, then $T(F_1)$ and $T(F_2)$ are the foci of $T(E_0)$.\newline(iii) If Q is a cyclic quadrilateral, we let $C(Q)$ denote the center of the circle circumscribed about Q. Then $C\left(T(Q)\right)=T\left(C(Q)\right)$\end{lemma}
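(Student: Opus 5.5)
The three parts are elementary consequences of the fact that $T$ is a similarity: $T(P)=kP$ is linear, and $|T(P)-T(R)|=|k|\,|P-R|$ for all points $P,R$. I will use the metric characterizations of ellipses and circumcenters rather than coordinate equations, since these transfer directly under a similarity.

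For (i), write $E_0=\{P : |P-F_1|+|P-F_2|=2a\}$ with $2a>|F_1-F_2|$. Since $T$ is a bijection with inverse $T^{-1}(x,y)=(x/k,y/k)$, every point of $T(E_0)$ has the form $T(P)$ with $P\in E_0$. Then
\[
|T(P)-T(F_1)|+|T(P)-T(F_2)|=|k|\bigl(|P-F_1|+|P-F_2|\bigr)=2|k|a .
\]
Conversely, any point $X$ satisfying $|X-T(F_1)|+|X-T(F_2)|=2|k|a$ gives $T^{-1}(X)\in E_0$. Hence
\[
T(E_0)=\{X : |X-T(F_1)|+|X-T(F_2)|=2|k|a\}.
\]
The strict inequality $2|k|a>|k|\,|F_1-F_2|=|T(F_1)-T(F_2)|$ is preserved, so $T(E_0)$ is a non-degenerate ellipse. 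If one prefers the algebraic form, substituting $x=X/k$, $y=Y/k$ into a conic equation with $AC-B^2/4>0$ multiplies the quadratic coefficients by $1/k^2$, so the sign of the discriminant is unchanged; the metric argument is cleaner.

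For (ii), the description of $T(E_0)$ obtained in (i) already exhibits $T(F_1)$ and $T(F_2)$ as a pair of foci. The only subtlety is uniqueness: a non-circular ellipse has exactly one pair of foci, namely the two points on the major axis at distance $c=\sqrt{a^2-b^2}$ from the center. In the circle case both foci coincide with the center, and $T$ maps the center to the center, so nothing more is needed. This uniqueness point is the only step requiring any care, and I would simply cite it as a standard fact about ellipses.

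For (iii), let $C=C(Q)$ with circumradius $r$, so each vertex $A_i$ of $Q$ satisfies $|A_i-C|=r$. Then $|T(A_i)-T(C)|=|k|\,r$ for every $i$, so $T(C)$ is equidistant from the vertices of $T(Q)$. Thus $T(Q)$ is cyclic. Note also that $T(Q)$ is still a convex quadrilateral, because $T$ is linear and invertible. Finally, a circle is determined by any three non-collinear points on it, so the center of the circle through the vertices of $T(Q)$ is unique. Hence $C(T(Q))=T(C(Q))$.
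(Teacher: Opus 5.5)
Your proof is correct and complete. The paper gives no proof of this lemma (it says only ``We omit the details''), so there is no argument of the paper's to compare against; your write-up supplies the missing details.

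Your key observation is that $T$ is a similarity with ratio $|k|$. That lets the focal-sum description $|X-F_1|+|X-F_2|=2a$ (with $2a>|F_1-F_2|$) and the equidistance characterization of the circumcenter carry over unchanged. The only outside facts you need are that a non-circular ellipse has a unique focal pair and that a unique circle passes through three non-collinear points. Both are standard, and citing them is appropriate.

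It is worth contrasting this with the coefficient-based machinery the paper uses elsewhere, namely Theorem~\ref{foci}. A proof in that style would substitute $(x/k,y/k)$ into $\phi$. One then checks that $\Delta$ and $\delta$ scale by $k^{-4}$, $\mu$ by $k^{4}$, and $k_A$ and $k_C$ by $k^{2}$, and reads off the foci. When $k<0$, the ``rightmost'' focus of $T(E_0)$ is then $T(F_1)$, not $T(F_2)$. Your metric argument avoids that bookkeeping, because it treats the foci as an unordered pair.

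One small point: in (ii), state explicitly that $E_0$ is a circle if and only if $T(E_0)$ is one, since $T(F_1)=T(F_2)$ exactly when $F_1=F_2$. This makes your split into the circular and non-circular cases consistent on both sides of $T$.
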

    \begin{remark}If $E_0$ is a circle with center $C_0$, then (ii) becomes $T(C_0)$ is the center of $T(E_0)$.\end{remark}
    We omit the details of the proof of Lemma \ref{scaling}.
    The following result links Q and $Q_{M}$. It is probably a known fact, but we provide the details of a proof here. It is not essential for our main results below, but it is also interesting in its own right.
    \begin{proposition}\label{orthomp}If Q is an orthodiagonal quadrilateral, then $Q_M$ is cyclic. Furthermore, if Q is also cyclic and $C_0$ and EP are the centers of the circles circumscribed about $Q_M$ and Q, respectively, then $C_0=\frac{1}{2}(EP+IP)$.
    \end{proposition}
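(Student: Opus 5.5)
The first claim, that $Q_M$ is cyclic, I would prove synthetically using the Varignon parallelogram. For any quadrilateral $Q$, the midpoint quadrilateral $Q_M$ is a parallelogram whose sides are parallel to the diagonals $D_1$ and $D_2$, with lengths half of theirs. This follows from the midpoint theorem applied to the two triangles cut off by each diagonal. If $Q$ is orthodiagonal, adjacent sides of $Q_M$ are perpendicular, so $Q_M$ is a rectangle. A rectangle is cyclic, and the center $C_0$ of its circumscribed circle is the intersection of its diagonals, which is the centroid of its four vertices. Hence
\[
C_0=\tfrac{1}{4}(MP_1+MP_2+MP_3+MP_4)=\tfrac{1}{4}(A_1+A_2+A_3+A_4),
\]
the vertex centroid of $Q$.

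For the second claim, I would show that if $Q$ is both cyclic and orthodiagonal, then $\tfrac{1}{4}\sum A_i=\tfrac{1}{2}(EP+IP)$. This is where I would lean on the coordinates of the paper. The statement is invariant under similarity, so by the normalization behind Lemma \ref{scaling} (together with rotations and translations) I may take $Q=Q_{s,t,v,w}$. The trapezoid case could be handled separately or by continuity, since the identity is polynomial. Lemma \ref{cycorth}(ii) gives $w=1-\frac{sv}{t}$, and Lemma \ref{orth} gives $H=0$, hence $v=\frac{tI}{s\beta}$ by (\ref{v}). Then $EP$ is given by (\ref{EP}) and $IP$ by (\ref{IP}), with $N=vt-ws$. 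It remains to verify the two scalar identities
\[
\frac{s+v}{4}=\frac12\left(\frac{\beta-t}{2s}+\frac{vs}{N+s}\right),\qquad
\frac{1+t+w}{4}=\frac12\left(\frac12+\frac{vt}{N+s}\right).
\]

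To simplify these, note that orthodiagonality gives $N=vt-s+\frac{s^2v}{t}=\frac{v\beta}{t}-s$, so $N+s=\frac{v\beta}{t}$. Then $IP=\left(\frac{st}{\beta},\frac{t^2}{\beta}\right)$, which is simply the foot of the perpendicular from $(0,1)$ onto $D_1$, as expected. With this, the $y$-identity becomes $\frac{1+t+w}{2}=\frac12+\frac{t^2}{\beta}$. Substituting $w=1-\frac{sv}{t}$ and $v=\frac{tI}{s\beta}$, it reduces to $t+1-\frac{I}{\beta}=\frac{2t^2}{\beta}$, that is, $I=\beta(t+1)-2t^2=t^3-t^2+s^2(t+1)$. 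This holds by the definition of $I$. The $x$-identity reduces to $s+\frac{tI}{s\beta}=\frac{\beta-t}{s}+\frac{2st}{\beta}$. Multiplying by $s\beta$, this becomes $s^2\beta+tI=\beta^2-t\beta+2s^2t$, which I expect to check by direct expansion.

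A cleaner alternative I would keep in reserve is the vector identity for a cyclic orthodiagonal quadrilateral with circumcenter $O$: $\sum A_i=2\,O+2\,IP$. This follows from the fact that the midpoint of each diagonal is the projection of $O$ onto that diagonal, together with the perpendicularity of the diagonals. Concretely, $\tfrac12(A_1+A_3)$ and $\tfrac12(A_2+A_4)$ are the feet of the perpendiculars from $O$ to $D_1$ and $D_2$. Since the diagonals are perpendicular, these two feet, $O$, and $IP$ form a rectangle, so the sum of the two feet equals $O+IP$. Dividing by $2$ gives exactly $C_0=\tfrac12(EP+IP)$. I would likely present this synthetic argument and use the coordinate check only as confirmation.

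The main obstacle is the algebraic verification in the $x$-coordinate, or equivalently justifying the rectangle step. With the synthetic route, that step is immediate once the perpendicular-bisector facts are stated, so I expect no serious difficulty.
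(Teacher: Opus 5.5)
Your proof is correct, but it follows a different route from the paper's.

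The paper stays in the normalized coordinates $Q_{s,t,v,w}$ throughout. It writes down the circle through $MP_1,MP_2,MP_3$ and shows that $MP_4$ lies on it iff $w=1-\frac{sv}{t}$. It then computes the center $C_0=\left(\frac{s+v}{4},\frac{t^2+2t-sv}{4t}\right)$ explicitly and substitutes $v=\frac{tI}{s\beta}$ to match $\frac12(EP+IP)$.

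You use the Varignon parallelogram instead. It makes $Q_M$ a rectangle and identifies $C_0$ immediately as the vertex centroid $\frac14\sum A_i$, so no circle has to be computed. This agrees with the paper's $C_0$ once $w=1-\frac{sv}{t}$. Your coordinate check is then just the paper's final substitution step, and it does go through: both sides of $s^2\beta+tI=\beta^2-t\beta+2s^2t$ expand to $s^4+2s^2t^2+t^4-t^3+s^2t$.

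Your synthetic alternative, $\frac12(A_1+A_3)+\frac12(A_2+A_4)=EP+IP$, is cleaner still. It is coordinate-free and needs neither Lemma \ref{orth} nor the normalization. It covers trapezoids and squares at no cost, so your continuity remark is unnecessary; nothing in your coordinate computation uses $s\neq v$ either. It also explains why the identity holds. If you write the rectangle step as a decomposition in the orthonormal frame along the diagonals, degenerate positions such as $EP$ lying on a diagonal are covered automatically.

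What the paper's computation buys is the equivalence ``$Q_M$ cyclic $\iff$ $Q$ orthodiagonal'' and an explicit $C_0$ in the coordinates used elsewhere in the paper. The converse also falls out of your argument, though, since a parallelogram is cyclic only when it is a rectangle.
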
. We give the proof in the Appendix.
    For those reading the book \cite{ref6}, we have the following corrections and modifications: \begin{remark}In \cite{ref6} we referred to the lower left hand corner(LLC) vertex of Q. But we did not give a precise definition of LLC, and indeed there is no such definition in all cases. Also, we assumed that $t \> w$, but it is possible that $t=w$. For example, one could have $s=2,t=3,v=4,w=3$.\end{remark}
    The following result from \cite{ref6} is very useful for the rest of this paper(here these are the same coefficients, but written in a slightly different form).
    Let $J$ be the open line segment $(0,1)$. We introduce another expression used a lot below. \begin{equation}\tau =(s-v)r+v\end{equation}
    \begin{proposition}\label{ins}(i) $E_0$ is an ellipse inscribed in $Q_{s,t,v,w}$ if and only if the general equation of $E_0$ is given by $ \psi(x,y)=0$ for some $r \in J$, where \begin{equation}\psi(x,y)=A(r)x^2+B(r)xy+C(r)y^2+D(r)x+E(r)y+F(r)\label{psi}\end{equation} and \begin{gather}A(r)=\left((w-1)^2s^2-2v(wt+t-2w)s+t^2v^2\right)r^2+2v(st-2ws-tN)r+t^2v^2\nonumber\\B(r)=-2vs\left(2(v-s)r^2+(s-2v-N)r+vt\right)
    \\C(r)=s^2v^2,D(r)=2svr\left((N-s)r-N+sw\right)\nonumber\\E(r)=-2s^2v^2r,F(r)=s^2v^2r^2\nonumber\end{gather}\newline
    (ii) For any ellipse, $E_0$, inscribed in $Q_{s,t,v,w}$, the center of $E_0$ is $(x_0,y_0)$, where \begin{equation}x_0=\frac{sv}{2 \tau},y_0=\frac{(s-N)r+vt}{2 \tau}\end{equation}\label{center}\end{proposition}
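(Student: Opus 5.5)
The plan is to verify directly that $\psi$ is an ellipse tangent to all four side lines at interior points of the sides, and then to get the converse from a uniqueness argument for conics tangent to four lines. Throughout I use the parameter $r\in J$ as the $y$-coordinate of the point of tangency on the side $S_1$.

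\textbf{Sufficiency in (i).} First I substitute each side line into $\psi$ and check that the restriction is a perfect square.
\begin{itemize}
\item On $S_1$ ($x=0$) this is immediate: $\psi(0,y)=s^2v^2(y^2-2ry+r^2)=s^2v^2(y-r)^2$. So $\psi=0$ meets $S_1$ only at $(0,r)$, which lies on the open side $\overline{(0,0)(0,1)}$ because $r\in J$.
\item For $S_4$ I set $y=\frac{w}{v}x$, for $S_2$ I set $y=1+\frac{t-1}{s}x$, and for $S_3$ I parametrize the line through $(s,t)$ and $(v,w)$. In each case $\psi$ becomes a quadratic in $x$. I will show that its discriminant vanishes identically in $r$, which is a routine but heavy symbolic computation.
\item From the double root I read off the tangency point. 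I then use \eqref{R1} and \eqref{R2}, together with $0<r<1$, to show that this point lies strictly between the two vertices of the corresponding side. For instance, on $S_4$ the root should have the form $x=\frac{\lambda(r)\,v}{\mu(r)}$ with $0<\lambda(r)/\mu(r)<1$; showing this is a matter of sign arguments using $N>0$ and $N+s-v>0$.
\item Finally I must check that $\psi=0$ is a genuine real ellipse and not a hyperbola, a degenerate conic or an empty conic. I will compute $4AC-B^2$, expecting it to factor as $16s^2v^2r(1-r)$ times a factor whose positivity follows from \eqref{R2}. A conic with $4AC-B^2>0$ that has real points (the tangency points) and is tangent to four distinct lines is a nondegenerate ellipse.
\end{itemize}
Since the ellipse is tangent to all four side lines at interior points of the sides of the convex quadrilateral, it lies inside $Q_{s,t,v,w}$ and is inscribed.

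\textbf{Necessity in (i).} Let $E_0$ be any ellipse inscribed in $Q_{s,t,v,w}$. It touches $S_1$ at some point $(0,r_0)$ with $r_0\in J$. Conics tangent to four lines in general position form a one-parameter family: dually, this is a pencil of conics through four points. In this family a conic is determined by its point of contact with one fixed line, here $S_1$. Because $Q$ is not a trapezoid, the dual points are in general position, so this uniqueness applies. Both $E_0$ and the conic $\psi=0$ with $r=r_0$ are tangent to the four lines and touch $S_1$ at $(0,r_0)$. Hence they coincide up to a scalar multiple of the equation, and $E_0$ has the stated form.

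\textbf{Part (ii).} The center of $E_0$ solves $\psi_x=\psi_y=0$, that is,
\[2Ax+By+D=0,\qquad Bx+2Cy+E=0.\]
By Cramer's rule,
\[x_0=\frac{BE-2CD}{4AC-B^2},\qquad y_0=\frac{BD-2AE}{4AC-B^2}.\]
I then substitute the coefficients and cancel the common factor, which I expect to be $r(1-r)$ times powers of $s$ and $v$, leaving denominator $2\tau$. As a partial check, the numerator for $x_0$ must reduce to $sv$ times the same factor.

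The main obstacle is the algebra: showing that the $S_2$ and $S_3$ discriminants vanish identically, and above all factoring $4AC-B^2$ cleanly enough to see its sign. To keep this manageable I would first try writing $w=(vt-N)/s$ and expressing everything in terms of $N$.
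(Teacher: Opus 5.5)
The paper does not prove this proposition. It is quoted from \cite{ref6}, so there is no in-text argument to compare yours against, and your proposal has to stand on its own. It does. Direct verification for sufficiency, uniqueness of the conic tangent to four lines with a prescribed contact point on one of them for necessity, and Cramer's rule for the center together make a complete, self-contained proof, which is what you gain over the paper's bare citation. The deferred algebra also comes out as you predict. Using $sw=vt-N$, on $S_4$ one gets $\psi=\bigl((N+(s-N)r)x-svr\bigr)^2$, and on $S_2$ one gets $\psi=\bigl((v+(s-2v+N)r)x-sv(1-r)\bigr)^2$. Moreover
\[
4AC-B^2=16s^2v^2r(1-r)\,\tau\,\bigl(N+(s-v)r\bigr),
\]
where $\tau>0$ by (\ref{R1}), and $N+(s-v)r>0$ because it is linear in $r$ with endpoint values $N$ and $N+s-v$.

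For (ii), the factor you cancel is $8s^2v^2r(1-r)(N+(s-v)r)$, not just $r(1-r)$ times powers of $s$ and $v$; for example, $BE-2CD=8s^3v^3r(1-r)(N+(s-v)r)$. A shortcut does the $S_3$ check and (ii) at once. The adjugate of the coefficient matrix of $\psi$ equals $2s^2v^2r(1-r)(N+(s-v)r)$ times the symmetric matrix with rows $(0,\,svr,\,sv)$, $(svr,\,2swr,\,(s-N)r+vt)$ and $(sv,\,(s-N)r+vt,\,2\tau)$. A line $ax+by+c=0$ is tangent exactly when this form vanishes at $(a,b,c)$; for $S_3$ take $(w-t,\,s-v,\,N)$. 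The center is the last column divided by its last entry.

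Two small corrections to the necessity step:
\begin{itemize}
\item General position of the four side lines means no three are concurrent, and that follows from convexity. The non-trapezoid hypothesis plays no role here, since parallel sides simply meet at infinity.
\item Uniqueness of the member of the dual pencil with a prescribed tangent requires the contact point $(0,r_0)$ to avoid $S_1\cap S_2$, $S_1\cap S_3$ and $S_1\cap S_4$; otherwise the unique member is degenerate. This holds for $r_0\in J$, because $(0,0)$ and $(0,1)$ lie strictly on the same side of $S_3$.
\end{itemize}
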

    Key to our proof below of Lemma \ref{focip} is the following well-known result of Marden. We state the special case when the conic is an ellipse and use $t_k=\frac{m_k}{\sum_{i=1}^{3} m_k}$ from \cite{ref9}. For the results below, $z_1$,$z_2$, and $z_3$ are three distinct, noncollinear points in the complex plane and $L_1$,$L_2$, and $L_3$ are the line segments connecting $z_2$, and $z_3$, $z_1$ and $z_3$, and $z_1$ and $z_2$, respectively.\newline
    \textbf{Notation}: Throughout this section, given real numbers $t_1,t_2,t_3$ with $\sum_{i=1}^{3} t_k=1$, and given $t_1t_2t_3>0$ as above, we let \begin{equation}F(z)=\frac{t_1}{z-z_1}+\frac{t_2}{z-z_2}+\frac{t_3}{z-z_3}\label{F(z)}\end{equation}
    \begin{theorem}[Marden]\label{Marden}Let $Z_1$ and $Z_2$ denote the zeros of $F(z)$. If $t_1t_2t_3>0$ then $Z_1$ and $Z_2$are the foci of an ellipse, $E_0$, which is tangent to $L_1,L_2$, and $L_3$ at the points $\zeta_1, \zeta_2,\zeta_3$, where \begin{gather}\zeta_1=\frac{t_2z_3+t_3z_2}{t_2+t_3}\nonumber\\\zeta_2=\frac{t_1z_3+t_3z_1}{t_1+t_3}\\\zeta_3=\frac{t_1z_2+t_2z_1}{t_1+t_2},
    \nonumber\end{gather}\label{zeta123}respectively.
    \end{theorem}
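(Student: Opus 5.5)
The plan is to prove Theorem \ref{Marden} through the focal characterization of tangent lines, not through the coefficient formulas of Proposition \ref{ins}. Since $t_1+t_2+t_3=1$, clearing denominators in (\ref{F(z)}) gives
\begin{equation*}
P(z):=(z-z_1)(z-z_2)(z-z_3)F(z)=t_1(z-z_2)(z-z_3)+t_2(z-z_1)(z-z_3)+t_3(z-z_1)(z-z_2).
\end{equation*}
This is monic of degree two, so $P(z)=(z-Z_1)(z-Z_2)$. I will use two classical facts about an ellipse with foci $Z_1,Z_2$ and a line $L$ through a point $\zeta$ with direction $u$. First, $L$ is tangent to the ellipse at $\zeta$ if and only if $Z_1,Z_2$ lie strictly on the same side of $L$ and $Z_1-\zeta$, $Z_2-\zeta$ make equal angles with $L$ (the reflection property). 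Second, for any tangent line, the product of the distances from the two foci to it equals $b^2$, where $b$ is the semi-minor axis. Conversely, for fixed foci, any line with both foci on one side is tangent to exactly one confocal ellipse, and that ellipse is determined by the value of this product.

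\textbf{Step 1: the points $\zeta_k$ satisfy the angle condition.} Evaluate $P$ at $\zeta_3$. From $\zeta_3-z_1=\frac{t_1(z_2-z_1)}{t_1+t_2}$ and $\zeta_3-z_2=\frac{t_2(z_1-z_2)}{t_1+t_2}$, the first two terms of $P(\zeta_3)$ cancel and one obtains
\begin{equation*}
(\zeta_3-Z_1)(\zeta_3-Z_2)=P(\zeta_3)=-\frac{t_1t_2t_3}{(t_1+t_2)^2}(z_2-z_1)^2.
\end{equation*}
Hence $(\zeta_3-Z_1)(\zeta_3-Z_2)/u^2$ is real, with $u=z_2-z_1$. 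This says $\arg(Z_1-\zeta_3)+\arg(Z_2-\zeta_3)\equiv 2\arg u \pmod{\pi}$, which is exactly the condition that $L_3$ bisects the angle between the focal directions, internally or externally. By symmetry the same computation works at $\zeta_1$ and $\zeta_2$. Because $t_1t_2t_3>0$, all the $t_k$ have one sign, so each $\zeta_k$ is a convex combination of two vertices and lies on the open segment $L_k$. The fixed sign of $P(\zeta_k)/u^2$ should then select the correct alternative: the foci lie on the same side and the reflection law holds, rather than the foci being mirror images across the line.

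\textbf{Step 2: the foci are interior.} I need $Z_1,Z_2$ strictly inside the triangle. I would use the Gauss–Lucas style argument. If $z$ is a zero of $F$, then $\sum t_k\,\overline{(z-z_k)}/|z-z_k|^2=0$, which writes $z$ as a convex combination of $z_1,z_2,z_3$ with positive weights $t_k/|z-z_k|^2$. This works when all $t_k>0$; if all three are negative, divide by $-1$. Noncollinearity then gives strict interiority. The possibility of a real double root, $Z_1=Z_2$, gives the circle case and causes no trouble.

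\textbf{Step 3: the three tangent ellipses coincide.} This is the main obstacle. Steps 1 and 2 give, for each $k$, a confocal ellipse tangent to $L_k$ at $\zeta_k$, but a priori these could be three different confocal ellipses. I would show that $d(Z_1,L_k)\,d(Z_2,L_k)$ does not depend on $k$. Writing $d(Z,L_3)=|\operatorname{Im}((Z-z_1)\overline{u})|/|u|$ and using $\operatorname{Im}a\,\operatorname{Im}b=\tfrac12(\operatorname{Re}(a\bar b)-\operatorname{Re}(ab))$, the term $\operatorname{Re}(ab)$ involves $(Z_1-z_1)(Z_2-z_1)=P(z_1)=t_1(z_1-z_2)(z_1-z_3)$. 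The term $\operatorname{Re}(a\bar b)$ needs $|Z_1-z_1|$ and $|Z_2-z_1|$, which is where the real difficulty lies.

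If that route becomes messy, I would instead compute $|\zeta_k-Z_1|+|\zeta_k-Z_2|$ directly. By Step 1, $|\zeta_k-Z_1|\,|\zeta_k-Z_2|$ is known explicitly. By the reflection property, $|\zeta_k-Z_1|+|\zeta_k-Z_2|=|Z_1-Z_2'|$, where $Z_2'$ is the reflection of $Z_2$ in $L_k$. So it would suffice to show that $|Z_1-Z_2'|^2$ is a symmetric expression in the indices. A cleaner fallback is to verify the $k$-independence by a normalization: place $z_1=0$, $z_2=1$ by an affine map $z\mapsto az+b$. Such a map preserves both the zeros of $F$ and the ratios defining the $\zeta_k$, and scales all distances uniformly. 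One can then check $d(Z_1,L_3)d(Z_2,L_3)=d(Z_1,L_1)d(Z_2,L_1)$ by direct algebra using Vieta's formulas $Z_1+Z_2$ and $Z_1Z_2$ read off from $P$.
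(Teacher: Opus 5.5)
The paper gives no proof of this statement; it quotes Marden's theorem from the literature, so your argument would be a self-contained replacement. Steps 1 and 2 are sound for positive weights. The computation of $P(\zeta_3)$ is right, and the ``should'' in Step 1 is easily confirmed: if $(Z_j-\zeta_k)/u=\rho_je^{i\theta_j}$, a negative product forces $\theta_1+\theta_2\equiv\pi \pmod{2\pi}$, so $\sin\theta_1$ and $\sin\theta_2$ have the same sign.

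\textbf{Genuine gap: Step 3 is not carried out.} Step 3 is the actual content of the theorem. Until it is done you only have, for each $k$ separately, \emph{some} ellipse with foci $Z_1,Z_2$ tangent to $L_k$ at $\zeta_k$. You leave it as a list of strategies with none completed. Your first route does close, with one observation: the term you flag as hard never needs computing, because it depends only on the vertex and not on the direction of the side. For a line $L$ through $z_1$ with direction $u$ and both foci on the same side,
\begin{equation*}
d(Z_1,L)\,d(Z_2,L)=\tfrac12\,\mathrm{Re}\bigl[(Z_1-z_1)\overline{(Z_2-z_1)}\bigr]-\tfrac12\,\mathrm{Re}\bigl[P(z_1)\,\bar u/u\bigr],\qquad P(z_1)=t_1(z_2-z_1)(z_3-z_1).
\end{equation*}
For $u=z_2-z_1$ (side $L_3$) the last term is $\tfrac{t_1}{2}\mathrm{Re}\bigl[(z_3-z_1)\overline{(z_2-z_1)}\bigr]$. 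For $u=z_3-z_1$ (side $L_2$) it is $\tfrac{t_1}{2}\mathrm{Re}\bigl[(z_2-z_1)\overline{(z_3-z_1)}\bigr]$. Since $t_1$ is real, these are real parts of conjugate numbers, hence equal. So the focal-distance products on $L_2$ and $L_3$ agree. The same computation at $z_2$ compares $L_1$ with $L_3$. The three confocal ellipses from Step 1 therefore share $b^2$ and coincide. The reality of $P(z_k)/((z_i-z_k)(z_j-z_k))=t_k$ says exactly that $Z_1,Z_2$ are isogonal conjugates, which is the classical reason this works.

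\textbf{Secondary error: the sign inference.} The claim ``$t_1t_2t_3>0$, so all $t_k$ have one sign'' is false. With $\sum t_k=1$ the all-negative case cannot occur, while $(t_1,t_2,t_3)=(3,-1,-1)$ is allowed. In that case $\zeta_2,\zeta_3$ lie outside the segments and the zeros of $F$ lie outside the triangle, so Step 2 fails. With $L_k$ read as segments, the statement is only true when every $t_k>0$. That is the only case the paper uses, since Lemma \ref{tj} produces positive $t_k$. So you should either state that hypothesis explicitly or read $L_k$ as full lines. In the latter case, replace Step 2 as follows. If a focus lay on $L_3$, the sign in Step 1 would put both foci on $L_3$, making $P(z_1)/(z_2-z_1)^2$ real. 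But it equals $t_1(z_3-z_1)/(z_2-z_1)$, which is not real. Hence no focus lies on a side line, and the negative sign in Step 1 places both foci on the same side of each $L_k$. The rest of your argument, including the fix of Step 3, then goes through unchanged.
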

    \begin{remark}Though it is not stated explicitly, in the case when $E_0$ is a circle, the foci $Z_1$ and $Z_2$ are identical and correspond to the center of that circle. Also, $F(z)$ has a double root at $Z_1$=$Z_2$.\end{remark}
    We also need the following result--sort of a converse of Marden's Theorem. We really only need the result for non-circular ellipses, but we state it for circles as well.
    \begin{theorem}\label{MTC}Suppose that $E_0$ is an ellipse tangent to $L_1$,$L_2$, and $L_3$.\newline
    (i) If $E_0$ is not a circle and $Z_1$ and $Z_2$ are the foci of $E_0$, then there exists $t_1,t_2,t_3$ with $t_1t_2t_3>0$ and $\sum_{i=1}^{3} t_k=1$ such that $F(z)$ has zeros $Z_1$ and $Z_2$, where $F$ is given by (\ref{F(z)}).\newline
    (ii) If $E_0$ is a circle and $Z_1$ is the center of $E_0$, then there exists $t_1,t_2,t_3$ with $t_1t_2t_3>0$ and $\sum_{i=1}^{3} t_k=1$ such that $F(z)$ has a double zero at $Z_1$, where $F$ is given by (\ref{F(z)}).\end{theorem}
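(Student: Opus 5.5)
We need to prove Theorem \ref{MTC}, the converse of Marden's theorem: given an ellipse tangent to the three sides of the triangle with vertices $z_1,z_2,z_3$, we must produce weights $t_k$ with $t_1t_2t_3>0$ and $\sum t_k=1$ such that the zeros of $F(z)$ are the foci. The plan is to parametrize the tangency points by the $t_k$, let Marden's theorem build the ellipse, and then invoke uniqueness of an inscribed conic with prescribed tangency points.

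\textbf{Step 1: barycentric parameters from the tangency points.} Let $E_0$ touch $L_1,L_2,L_3$ at $\zeta_1,\zeta_2,\zeta_3$. These lie in the relative interiors of the sides, since an inscribed ellipse touches each side segment at an interior point. Write
\[
\zeta_1=(1-a)z_2+az_3,\quad \zeta_2=(1-b)z_3+bz_1,\quad \zeta_3=(1-c)z_1+cz_2,
\]
with $a,b,c\in(0,1)$. By Ceva's theorem applied to an inscribed conic (a Brianchon-type fact: the three cevians to the points of tangency are concurrent), we have $\frac{a}{1-a}\cdot\frac{b}{1-b}\cdot\frac{c}{1-c}=1$. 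I would prove this either by an affine map sending $E_0$ to a circle, where it reduces to equal tangent lengths from each vertex, or simply cite it.

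Concurrency at an interior point $P$ gives positive barycentric coordinates $P=t_1z_1+t_2z_2+t_3z_3$ with each $t_k>0$ and $\sum t_k=1$. The cevian from $z_1$ through $P$ meets $L_1$ at $\frac{t_2z_2+t_3z_3}{t_2+t_3}$. Comparing this with Marden's formula, the roles of $z_2$ and $z_3$ are swapped, so I would instead take the barycentric coordinates of the \emph{isotomic} point. Equivalently, I solve $t_2/t_3=$ (the appropriate ratio) on each side; the Ceva relation is exactly the compatibility condition that makes these three ratio equations consistent. This yields $t_k>0$, hence $t_1t_2t_3>0$, and the $\zeta_k$ coincide with the points given in Theorem \ref{Marden}.

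\textbf{Step 2: apply Marden and use uniqueness.} With these $t_k$, Theorem \ref{Marden} gives an ellipse $E_1$ whose foci are the zeros $Z_1',Z_2'$ of $F$, and which is tangent to $L_1,L_2,L_3$ at the same points $\zeta_1,\zeta_2,\zeta_3$. A conic is determined by five conditions. Tangency at a given point of a given line counts as two conditions, so three tangency points with their tangent lines give six conditions, which over-determines the conic. Thus $E_1=E_0$, provided I justify uniqueness. For that I would apply an affine map sending $E_0$ to the unit circle. A second conic tangent to the three lines at the same three points lies in the pencil $\lambda\,(\text{circle})+\mu\,\ell_1\ell_2$, where $\ell_1,\ell_2$ are tangent lines at two of the points. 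Tangency at the third point then forces $\mu=0$. Once $E_1=E_0$, the two ellipses have the same foci, which gives (i). For (ii), in the circle case, the remark after Theorem \ref{Marden} says the foci coincide and $F$ has a double zero at the center, which gives (ii).

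\textbf{Main obstacle.} The delicate step is the index bookkeeping in Step 1: matching the tangency ratios to Marden's $\zeta_k$ exactly, including the isotomic swap, while verifying that positivity is preserved. The rigorous uniqueness argument for the conic tangent at three prescribed points is the other nontrivial piece. I would handle both after the affine normalization to the incircle, where the tangent lengths $x,y,z$ from the vertices make everything explicit. Concretely, $t_1:t_2:t_3=\frac1x:\frac1y:\frac1z$ should work, since then $\frac{t_2z_3+t_3z_2}{t_2+t_3}=\frac{zz_2+yz_3}{y+z}$, which is the incircle tangency point on $L_1$.
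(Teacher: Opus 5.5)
Your overall route is the paper's: find weights for which Marden's tangency points $\zeta_k$ coincide with those of $E_0$ (the paper's Lemma \ref{tj}), let Theorem \ref{Marden} produce an ellipse $E_1$ with those tangency points, and use uniqueness of such an inscribed ellipse (the paper cites Theorem \ref{Tbound}) to get $E_1=E_0$ and hence equal foci. The circle case is handled the same way in both. The sub-arguments differ only in how they are justified. The paper obtains the weights by solving an explicit linear system built from the parametrization of Proposition \ref{triins}. You instead use the Ceva (Brianchon) relation, proved via an affine map to the incircle, as the compatibility condition for the three ratio equations. Your pencil argument is a correct, self-contained substitute for Theorem \ref{Tbound}. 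In fact, merely passing through the third tangency point already forces $\mu=0$, since that point is not a vertex.

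However, the explicit choice in your last paragraph is wrong, and it is exactly the isotomic swap you warned about. With $t_2:t_3=\frac{1}{y}:\frac{1}{z}$ one gets $\frac{t_2z_3+t_3z_2}{t_2+t_3}=\frac{yz_2+zz_3}{y+z}$. This is the reflection of the incircle contact point $\frac{zz_2+yz_3}{y+z}$ through the midpoint of $L_1$, and the two agree only when $y=z$. The weights $\frac{1}{x}:\frac{1}{y}:\frac{1}{z}$ are the barycentrics of the Gergonne point, i.e.\ of the concurrency point itself before the swap. Marden's point on $\overline{z_jz_k}$ lies at distance $\frac{t_j}{t_j+t_k}|z_k-z_j|$ from $z_j$. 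So the correct choice is $t_1:t_2:t_3=x:y:z$, the Nagel point, which is the isotomic conjugate of the Gergonne point. In your general notation this reads $t_2:t_3=a:(1-a)$, $t_3:t_1=b:(1-b)$, $t_1:t_2=c:(1-c)$, and the consistency of these three equations is exactly your Ceva relation. With that correction the proof is complete.
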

    Before proving Theorem \ref{MTC}, we need the following results--the first two are from \cite{ref7}.
    \begin{proposition}\label{triins}Let $T$ be the triangle with vertices $(0,0),(1,0)$, and $(0,1)$. Then\newline
    (i) $E_0$ is an ellipse inscribed in $T$ if and only if the general equation of $E_0$ is given by \begin{equation}w^2x^2+t^2y^2-2wt(2wt-2w-2t+1)xy-2w^2tx-2t^2wy+t^2w^2=0\label{wt}\end{equation} for some $(w,t) \in S=(0,1)\times(0,1)$. Furthermore,\newline(ii) If $E_0$ is the ellipse given in (i) with equation \ref{wt} for some $(w,t) \in S$, then $E_0$ is tangent to the three sides of $T$ at the points $T_1=(t,0),T_2=(0,w)$, and $T_3=\left(\frac{t(1-w)}{t+(1-2t)w},\frac{w(1-t)}{t+(1-2t)w}\right)$.\end{proposition}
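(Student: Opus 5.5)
The plan is to verify the forward direction by direct substitution, and to obtain the converse from a uniqueness argument in the pencil of conics tangent to the two coordinate axes at prescribed points.

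\emph{Forward direction.} Fix $(w,t)\in S$ and let $\phi(x,y)$ be the left side of (\ref{wt}).

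First I check that $\phi=0$ is a nondegenerate real ellipse. The $xy$ coefficient is $-2wt\,c$ with $c=2wt-2w-2t+1=2(1-w)(1-t)-1$. Hence the discriminant is
\begin{equation*}
B^2-4AC=4w^2t^2\left(c^2-1\right).
\end{equation*}
Since $0<(1-w)(1-t)<1$, we have $c\in(-1,1)$, so the discriminant is negative and $\phi=0$ is of elliptic type. It is nondegenerate and nonempty because it contains the real points found next.

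Next I check tangency to the three sides of $T$:
\begin{itemize}
\item On $y=0$: $\phi(x,0)=w^2(x-t)^2$, a double root at $x=t$. So the conic is tangent to the line $y=0$ at $T_1=(t,0)$, which lies in the open side because $0<t<1$.
\item On $x=0$: $\phi(0,y)=t^2(y-w)^2$, so the conic is tangent at $T_2=(0,w)$.
\item On $x+y=1$: substitute $y=1-x$ and collect terms to get a quadratic in $x$. I then verify that its discriminant vanishes identically in $(w,t)$. This routine but messy expansion is where most of the algebra lies. The double root is $x=\frac{t(1-w)}{t+(1-2t)w}$, which gives $T_3$.
\end{itemize}
Writing $t+(1-2t)w=t(1-w)+w(1-t)>0$ shows that $T_3$ is a proper convex combination of $(1,0)$ and $(0,1)$. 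An ellipse tangent to all three sides of a triangle at interior points of those sides is inscribed, which gives (i) ($\Leftarrow$) and all of (ii).

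\emph{Converse.} Let $E_0$ be any ellipse inscribed in $T$. It touches the side on $y=0$ at some $(t,0)$ and the side on $x=0$ at some $(0,w)$, with $t,w\in(0,1)$.

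Every conic tangent to $y=0$ at $(t,0)$ and to $x=0$ at $(0,w)$ lies in the pencil
\begin{equation*}
\lambda\left(\frac{x}{t}+\frac{y}{w}-1\right)^2+\mu\,xy=0 .
\end{equation*}
This is the pencil spanned by the doubled chord of contact and the product of the two tangent lines.

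Imposing tangency to $x+y=1$ gives a homogeneous quadratic condition in $(\lambda:\mu)$. One root is $\mu=0$, the degenerate double line, which is not an ellipse. So there is at most one nondegenerate member. The conic (\ref{wt}) is a member of the pencil: multiplying by $w^2t^2$ matches the $x^2,y^2,x,y$ and constant terms. By the forward direction it is tangent to $x+y=1$, so it is that unique member. Hence $E_0$ has equation (\ref{wt}).

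\emph{Main obstacle.} I expect the main difficulty to be the converse. Specifically, I need to make sure that the tangency condition really factors as $\mu$ times a linear form, rather than having a second spurious nondegenerate root (for instance, a conic tangent to the line outside the segment, or a hyperbola). If the factorization is awkward, I would instead use the area-coordinate/Marden parametrization of the tangency points. Given $T_1$ and $T_2$ on the sides, the Ceva/Brianchon condition for inscribed conics determines the third contact point uniquely. The inscribed conic is then determined by five conditions: three contact points together with the tangent directions at two of them. Uniqueness then follows.
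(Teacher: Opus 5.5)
The paper does not prove this proposition; it imports it from \cite{ref7}. There is therefore no route of the paper's to compare against. Your argument is correct and self-contained, and its converse rests on the pencil $\lambda L^2+\mu xy$, which is a sound uniqueness mechanism.

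The ``main obstacle'' you flag is not a real one. The tangency condition is a binary quadratic form in $(\lambda:\mu)$. It is not identically zero, since $xy=0$ meets $x+y=1$ in the two distinct points $(1,0)$ and $(0,1)$. It also vanishes at $\mu=0$. So it is automatically $\mu$ times a linear form, with exactly one other root. Explicitly, take $L=\frac{x}{t}+\frac{y}{w}-1$ and restrict $\lambda L^2+\mu xy$ to $y=1-x$. The discriminant of the resulting quadratic is
\[
\mu\Bigl(\mu+\frac{4\lambda(1-w)(1-t)}{wt}\Bigr).
\]
With $\lambda=w^2t^2$, the nonzero root $\mu=-4wt(1-w)(1-t)$ reproduces exactly the $xy$-coefficient $-2wt(2wt-2w-2t+1)$ of (\ref{wt}). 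Hyperbolas, or conics tangent outside the segment, cannot slip in, because only one nondegenerate member of the pencil is tangent to that line at all. No Ceva/Marden fallback is needed.

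The same short computation also gives you the forward tangency to $x+y=1$, so you can drop the messy expansion. If you do expand, it is the perfect square $\bigl((t+(1-2t)w)x-t(1-w)\bigr)^2$, which confirms $T_3$.

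One step is left implicit: why an inscribed ellipse touches the legs at points with $t,w\in(0,1)$ rather than at a vertex. An ellipse tangent to $y=0$ at $(0,0)$ is locally a graph over an interval around $x=0$, so it contains points with $x<0$ and cannot lie in $T$.

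What your approach buys beyond the statement: applying an affine map that sends any two sides of a triangle to the legs of $T$, the same pencil argument proves Theorem \ref{Tbound} in full. That is existence and uniqueness of the inscribed ellipse with two prescribed non-vertex tangency points, which the paper also only cites from \cite{ref7}.
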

    \begin{theorem}\label{Tbound}Let $P_1$ and $P_2$ be distinct points which lie on different sides of the boundary, $\partial(T)$, of a triangle, $T$, and assume that neither $P_1$ nor $P_2$ equals one of the vertices of $T$. Then there is a unique ellipse inscribed in $T$ which is tangent to $\partial(T)$ at $P_1$ and at $P_2$.\end{theorem}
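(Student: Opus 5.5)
This plan proves Theorem \ref{Tbound} by an affine reduction to the standard triangle, followed by an explicit solve using Proposition \ref{triins}.

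\textbf{Reduction.} Affine maps of the plane send ellipses to ellipses, send tangency to tangency, and send the sides and vertices of a triangle to those of the image triangle. So there is an invertible affine map $\phi$ taking the given triangle onto $T$ with vertices $(0,0),(1,0),(0,1)$. Under $\phi$, the ellipses inscribed in the given triangle and tangent at $P_1,P_2$ correspond bijectively to the ellipses inscribed in $T$ and tangent at $\phi(P_1),\phi(P_2)$. Hence it suffices to treat $T$ itself.

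Moreover, $\phi$ may be chosen to send the two sides containing $P_1$ and $P_2$ to the two legs of $T$. Label the vertices so that the common vertex of those two sides goes to $(0,0)$; this is possible because any two distinct sides of a triangle share exactly one vertex. Then $\phi(P_1)=(a,0)$ and $\phi(P_2)=(0,b)$. Since $P_1,P_2$ avoid the vertices, $a,b\in(0,1)$.

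\textbf{Existence and uniqueness via Proposition \ref{triins}.} By Proposition \ref{triins}(i), every ellipse inscribed in $T$ is given by (\ref{wt}) for a unique $(w,t)\in S$. By part (ii), that ellipse touches the two legs at $T_1=(t,0)$ and $T_2=(0,w)$.

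\begin{itemize}
\item \emph{Existence.} Take $(w,t)=(b,a)\in S$. This gives an inscribed ellipse tangent at $(a,0)$ and $(0,b)$.
\item \emph{Uniqueness.} Any inscribed ellipse tangent to $\partial(T)$ at $(a,0)$ and $(0,b)$ is tangent to the $x$-axis at $(a,0)$. An ellipse meets a tangent line in only one point, so its point of tangency with that side is $(a,0)$, i.e.\ $T_1=(a,0)$ and $t=a$. Likewise $w=b$. Thus $(w,t)$ is forced, and the ellipse is unique.
\end{itemize}

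\textbf{Main obstacle.} The main obstacle is the bookkeeping in the uniqueness step. We must use that the parametrization in Proposition \ref{triins} is injective, i.e.\ distinct $(w,t)$ give distinct ellipses. This follows because the tangency points $T_1,T_2$ recover $(t,w)$: a given ellipse has one well-defined tangency point on each leg, so it cannot correspond to two parameter pairs.

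We also need that every inscribed ellipse really arises from (\ref{wt}). This is precisely the "only if" direction of Proposition \ref{triins}(i), which we may assume.

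Finally, a small check is needed that $\phi$ preserves "inscribed", meaning tangent to all three side segments and contained in the triangle. This holds because affine maps preserve convexity, incidence and tangency.
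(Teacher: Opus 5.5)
Your proof is correct. Note that the paper gives no proof of Theorem~\ref{Tbound}: it quotes the result from \cite{ref7}, alongside Proposition~\ref{triins}, and uses it only for the uniqueness step in the proof of Theorem~\ref{MTC}. So your argument is not a variant of a proof in the paper. It derives one imported result from the other, which is legitimate here because nothing in the paper derives Proposition~\ref{triins} from Theorem~\ref{Tbound}.

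The choice that makes your proof short is normalizing affinely so that the shared vertex of the two sides goes to $(0,0)$ and both points land on the legs. The tangency parameters can then be read off directly as $t=a$ and $w=b$ from Proposition~\ref{triins}(ii). You never have to invert the formula for the hypotenuse tangency point $T_3$, which a fixed labeling would force whenever one point lies on the hypotenuse. This is the same affine normalization the paper uses in the proof of Lemma~\ref{tj}. Compared with the paper's bare citation, your version makes the result self-contained modulo Proposition~\ref{triins}.

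One correction to your ``main obstacle'' paragraph: uniqueness does not need injectivity of $(w,t)\mapsto E_0$. Your uniqueness bullet already shows that any admissible ellipse must have equation (\ref{wt}) with $(w,t)=(b,a)$. Since an ellipse is the zero set of its equation, that bullet is complete as written. What existence does use is the ``if'' direction of Proposition~\ref{triins}(i): (\ref{wt}) defines a genuine inscribed ellipse for every $(w,t)\in S$. The ellipse part is easy to confirm directly. The discriminant is $4w^2t^2(u^2-1)$ with $u=2(1-w)(1-t)-1\in(-1,1)$, and the conic passes through the two distinct real points $(t,0)$ and $(0,w)$, so it is non-degenerate.
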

    \begin{lemma}\label{tj}Suppose that $E_0$ is an ellipse which is tangent to $L_1$,$L_2$, and $L_3$ at $\zeta_1, \zeta_2$, and $\zeta_3$, respectively. Then there exists $t_1,t_2,t_3$ with $t_1t_2t_3>0$ and $\sum_{i=1}^{3} t_k=1$ such that $\zeta_1, \zeta_2$, and $\zeta_3$ are given by (\ref{zeta123}).\end{lemma}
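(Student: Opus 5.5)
The statement to prove is Lemma \ref{tj}. The plan is to match the three tangency points with Marden's formulas by solving for $t_1,t_2,t_3$ directly.

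\textbf{Step 1: barycentric form of the tangency points.} Since $E_0$ is tangent to the segments $L_1,L_2,L_3$, and an ellipse inscribed in a triangle touches each side at an interior point, each $\zeta_k$ lies in the open segment $L_k$. So there are $\lambda_1,\lambda_2,\lambda_3\in(0,1)$ with
\[\zeta_1=(1-\lambda_1)z_2+\lambda_1 z_3,\quad \zeta_2=(1-\lambda_2)z_1+\lambda_2 z_3,\quad \zeta_3=(1-\lambda_3)z_1+\lambda_3 z_2.\]

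\textbf{Step 2: translate Marden's formulas into ratios.} Comparing with (\ref{zeta123}), we need
\[\frac{t_2}{t_2+t_3}=\lambda_1,\qquad \frac{t_1}{t_1+t_3}=\lambda_2,\qquad \frac{t_1}{t_1+t_2}=\lambda_3.\]
Equivalently, $t_2/t_3=\lambda_1/(1-\lambda_1)$, $t_1/t_3=\lambda_2/(1-\lambda_2)$ and $t_1/t_2=\lambda_3/(1-\lambda_3)$. The first two ratios fix $t_1,t_2,t_3$ up to scale. Normalize by setting $t_3$ equal to a positive constant and then dividing so that $\sum t_k=1$. All three $t_k$ are then positive, so $t_1t_2t_3>0$. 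The third ratio is consistent exactly when
\[\frac{\lambda_1}{1-\lambda_1}\cdot\frac{\lambda_3}{1-\lambda_3}=\frac{\lambda_2}{1-\lambda_2}.\]
Rearranged, this says $\frac{\lambda_1}{1-\lambda_1}\cdot\frac{1-\lambda_2}{\lambda_2}\cdot\frac{\lambda_3}{1-\lambda_3}=1$, which is precisely the Ceva condition: the cevians from each vertex to the opposite tangency point are concurrent.

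\textbf{Step 3 (main obstacle): proving the Ceva condition for inscribed ellipses.} This is a classical fact (the Brianchon point), but it must be derived from the results the paper has. I would first apply an affine map sending $z_1,z_2,z_3$ to the vertices of the triangle $T$ in Proposition \ref{triins}. Affine maps preserve tangency, ellipses, and ratios along segments, so it suffices to check the condition on $T$. There, Proposition \ref{triins}(ii) gives the tangency points explicitly as $T_1=(t,0)$, $T_2=(0,w)$ and $T_3$. The Ceva product is then a direct computation. On the hypotenuse from $(1,0)$ to $(0,1)$, the ratio in which $T_3$ divides it equals the ratio of its $x$- and $y$-coordinates, namely $t(1-w):w(1-t)$. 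The product of this with the ratios $t:(1-t)$ and $w:(1-w)$, taken with the appropriate orientations, gives $1$.

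As an alternative route, I could instead use Theorem \ref{Tbound}. Choose $t_k$ so that Marden's $\zeta_1,\zeta_2$ match the given points; this uses only the first two ratios. Marden's Theorem \ref{Marden} then produces an ellipse tangent to all three sides at $\zeta_1$ and $\zeta_2$. By the uniqueness in Theorem \ref{Tbound}, that ellipse is $E_0$, so its third tangency point is $\zeta_3$ as well. This sidesteps Ceva entirely and is the cleaner argument. I would present it as the main proof and keep the Step 3 computation as a check.
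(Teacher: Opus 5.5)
Your proposal is correct, and the argument you say you would actually present takes a different route from the paper's. Your Steps 1--3 are essentially the paper's proof. The paper also passes by affine invariance to $z_1=0$, $z_2=i$, $z_3=1$ and writes the tangency points with parameters $\lambda_k\in(0,1)$. It then reads off from Proposition \ref{triins} the relation $\lambda_1=\frac{(1-\lambda_3)\lambda_2}{\lambda_2+\lambda_3-2\lambda_2\lambda_3}$, which is exactly your Ceva identity in its parametrization. Finally it solves explicitly for $t_1=\frac{tw}{t+(1-t)w}$, $t_2=\frac{t(1-w)}{t+(1-t)w}$, $t_3=\frac{(1-t)w}{t+(1-t)w}$, all positive.

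Your preferred argument instead fixes positive $t_1,t_2,t_3$ from $\zeta_1,\zeta_2$ alone. It gets a matching ellipse $E_1$ from Theorem \ref{Marden}, and then $E_1=E_0$ from the uniqueness in Theorem \ref{Tbound}, so $\zeta_3$ comes for free. This needs no affine reduction and no computation, and it proves more than the lemma. Since $E_0=E_1$ and the foci of $E_1$ are the zeros of $F$, it already gives Theorem \ref{MTC}. The paper's proof of that theorem uses the same Marden-plus-Theorem \ref{Tbound} pairing, but only after Lemma \ref{tj} has matched all three points. What the paper's computational route buys in exchange is that the Ceva (Brianchon) relation and explicit $t_k$ are obtained from the description of inscribed ellipses, independently of Marden's theorem.

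One point to make explicit: Theorem \ref{Tbound} compares ellipses inscribed in the triangle, so you need $E_1$ to be inscribed. This holds because, with all $t_k>0$, each of Marden's points $\zeta_k$ is a positive-weight convex combination of the endpoints of $L_k$, hence interior to that side. The bare hypothesis $t_1t_2t_3>0$ would also allow two negative weights, and then two of Marden's points fall outside the segments.
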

    \begin{proof}(of Lemma \ref{tj}): By affine invariance, it suffices to prove Lemma\ref{tj} when $z_1=0,z_2=i$, and $z_3=1$, that is when $L_1$,$L_2$, and $L_3$ enclose the unit triangle. Then (\ref{zeta123}) becomes $\zeta_1=\frac{t_2+t_3i}{t_2+t_3}, \zeta_2=\frac{t_1}{t_1+t_3},\zeta_3=\frac{t_1i}{t_1+t_2}$. Since $E_0$ is tangent to $L_1$,$L_2$, and $L_3$ at $\zeta_1, \zeta_2$, and $\zeta_3$, we also have $\zeta_1=\lambda_1i+1-\lambda_1,\zeta_2=1-\lambda_2$, and $\zeta_3=(1-\lambda_3)i$ for some $0<\lambda_1,\lambda_2,\lambda_3<1$. Ignoring redundancies, $\zeta_1, \zeta_2$, and $\zeta_3$ are given by (\ref{zeta123}) if and only if \begin{gather}\frac{t_3}{t_2+t_3}=\lambda_1\nonumber\\\frac{t_1}{t_1+t_3}=1-\lambda_2\\\frac{t_1}{t_1+t_2}=1-\lambda_3\nonumber\label{tlm}\end{gather}
    By Proposition \ref{triins}, $\lambda_2=1-t,\lambda_3=1-w$, and $\lambda_1=\frac{w(1-t)}{t+(1-2t)w}$ for some $(w,t) \in S$, , which implies that $\lambda_1=\frac{(1-\lambda_3)\lambda_2}{\lambda_2+\lambda_3-2\lambda_2\lambda_3}$. Substituting for $\lambda_1$ and using $t_3=1-t_1-t_2$ yields the unique solution for (\ref{tlm}) given by $t_1=\frac{1+\lambda_2\lambda_3-\lambda_2-\lambda_3}{1-\lambda_2\lambda_3}=\frac{tw}{t+(1-t)w}, t_2=\lambda_3\frac{1-\lambda_2}{1-\lambda_2\lambda_3}=\frac{t(1-w)}{t+(1-t)w}$. Now $t+(1-t)w>0$ and hence $t_1,t_2>0$. Also, $t_3=\frac{(1-t)w}{t+(1-t)w}>0$, which finishes the proof of Lemma \ref{tj}.\end{proof}
    \begin{proof}(of Theorem \ref{MTC}): Suppose first that $E_0$ is not a circle(thus $Z_1 \neq Z_2$) and that $E_0$ is tangent to $L_1,L_2$, and $L_3$ at $\zeta_1, \zeta_2,\zeta_3$, respectively. By Lemma \ref{tj}, there exists $t_1,t_2,t_3$ with $t_1t_2t_3>0$ and $\sum_{i=1}^{3} t_k=1$ such that $\zeta_1, \zeta_2$, and $\zeta_3$ are given by \ref{zeta123}. Let $W_1$ and $W_2$ be the zeros of $F(z)$, where $F$ is given by (\ref{F(z)}). By Theorem \ref{Marden}, $W_1$ and $W_2$ are the foci of an ellipse, $E_1$, which is tangent to $L_1,L_2$, and $L_3$ at $\zeta_1, \zeta_2,\zeta_3$, respectively, where again, $\zeta_1, \zeta_2$, and $\zeta_3$ are given by \ref{zeta123}. Thus $E_0$ and $E_1$ are tangent to $L_1,L_2$, and $L_3$ at the same points. By Theorem \ref{Tbound}, $E_0=E_1$ and thus $Z_1$ and $Z_2$ and $W_1$ and $W_2$ are identical pairs, which implies that $F(z)$ has zeros $Z_1$ and $Z_2$. The case when $E_0$ is a circle is proved similarly, where in that case $Z_1=Z_2$ and $F(z)$ has a double zero at $W_1=W_2$.\end{proof}
    \textbf{Note}: While Lemma \ref{tj} is affine invariant, Theorem \ref{MTC} is not since it involves the foci of an ellipse.\newline
    Again, we really only need the following result for non-circular ellipses, but we state it for circles as well. Recall that $\tau=(s-v)r+v$.
    \begin{lemma}\label{focip}
    Suppose that $E_0$ is an ellipse inscribed in $Q_{s,t,v,w}$.\newline
    (i) If $E_0$ is not a circle, then the foci, $Z_1$ and $Z_2$, of $E_0$ are given by the roots of the quadratic polynomial \begin{gather}p(z)=z^2+\frac{-sv+(N-s)r-vt)i}{\tau}z\nonumber\\\frac{rs(-w+vi)}{\tau}\label{p}\end{gather}.\newline
    (ii) If $E_0$ is a circle with center $Z_1$, then $p$ has a double root at $z=Z_1$.\end{lemma}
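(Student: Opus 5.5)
The plan is to reduce the quadrilateral problem to Marden's theorem for a triangle cut out by three of the four side lines of $Q_{s,t,v,w}$. Since $E_0$ is tangent to all four lines $S_1,\dots,S_4$, it is tangent to any three of them. Take $z_1=0=S_1\cap S_4$ and $z_2=i=S_1\cap S_2$. For $z_3$, take the intersection of $S_4$ with whichever of $S_2,S_3$ gives a triangle containing $Q_{s,t,v,w}$, so that $E_0$ is inscribed in that triangle. These intersections exist because $Q$ is not a trapezoid. Which choice works depends on the sign of one of the quantities in (\ref{R2}), so this may require a case split. Working in the case where $S_2\cap S_4$ lies beyond $(v,w)$ on $S_4$, set $z_3=S_2\cap S_4$. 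In the other case the same argument runs with $S_3$ in place of $S_2$.

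Theorem \ref{MTC}(i) then gives weights $t_1,t_2,t_3$ with $t_1t_2t_3>0$ and $\sum t_k=1$ such that the foci $Z_1,Z_2$ are the zeros of $F(z)$. Clearing denominators, they are the roots of
\[
t_1(z-z_2)(z-z_3)+t_2(z-z_1)(z-z_3)+t_3(z-z_1)(z-z_2),
\]
which is monic because $\sum t_k=1$. Its sum of roots is $t_1(z_2+z_3)+t_2(z_1+z_3)+t_3(z_1+z_2)$, and since $z_1=0$ its product of roots is $t_1z_2z_3=it_1z_3$.

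It therefore suffices to identify two quantities.

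\textbf{Sum of the roots.} This must equal $2(x_0+iy_0)$, the center of $E_0$, because the center of an ellipse is the midpoint of its foci. By Proposition \ref{ins}(ii),
\[
2(x_0+iy_0)=\frac{sv+\left((s-N)r+vt\right)i}{\tau},
\]
which is exactly minus the coefficient of $z$ in (\ref{p}). So the linear coefficient needs no use of the $t_k$ at all.

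\textbf{Product of the roots.} This requires $t_1$ and $z_3$. I would get $t_1$ from the tangency points provided by Theorem \ref{Marden} together with uniqueness (Theorem \ref{Tbound}).

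On $S_1$ (the segment $z_1z_2$), $\psi(0,y)=s^2v^2(y-r)^2$ by Proposition \ref{ins}(i). Hence the tangency point is $ri$, and $\zeta_3=\frac{t_1 i}{t_1+t_2}=ri$ gives $t_1/(t_1+t_2)=r$.

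On $S_4$ (the segment $z_1z_3$), substitute $y=\frac{w}{v}x$ into $\psi$. This should give a perfect square in $x$, and its double root gives the tangency point as a fraction $\mu$ of the way from $0$ to $z_3$. Matching this with $\zeta_2=\frac{t_3 z_3}{t_1+t_3}$ gives $t_1/(t_1+t_3)=1-\mu$.

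Together with $t_1+t_2+t_3=1$, these two ratios determine $t_1$. I then expect $it_1z_3$ to simplify to $\frac{rs(-w+vi)}{\tau}$. A useful consistency check is that $z_3$ is a real multiple of $v+wi$, and $(v+wi)i=-w+vi$, which explains the form of the constant term.

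For part (ii), the same computation applies with Theorem \ref{MTC}(ii) in place of (i), so $p$ has a double root at the center.

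The main obstacle I expect is the bookkeeping. First, one must choose the triangle correctly so that $E_0$ is genuinely tangent to the segments $L_k$ and not just to their extensions, as Theorem \ref{MTC} requires. Second, the algebra for the tangency point on $S_4$ and for $z_3$ must collapse to the stated product using $N=vt-ws$. If that step is messy, I would instead verify it directly: check that the discriminant-free identity $Z_1Z_2=it_1z_3$ agrees with (\ref{p}) at a couple of symbolic specializations, and then confirm it in general by clearing the denominator $\tau$.
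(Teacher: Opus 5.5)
Your route differs from the paper's in how the constant term is obtained. The paper works in the triangle $S_1S_3S_4$, writes $Z_1+Z_2$ in terms of the weights, and treats $Z_1+Z_2=2(x_0+iy_0)$ as two real linear equations that determine $t_1,t_2$. It then substitutes these into the constant term. You instead read the linear coefficient off the center and get $t_1$ from tangency points. That is workable, but the tangency step as written fails. By Theorem~\ref{Marden}, $\zeta_2=\frac{t_1z_3+t_3z_1}{t_1+t_3}$, so with $z_1=0$ it equals $\frac{t_1}{t_1+t_3}z_3$. This is the same pattern as your correct $\zeta_3=\frac{t_1}{t_1+t_2}i$, so the relation is $t_1/(t_1+t_3)=\mu$, not $1-\mu$. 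Your relation gives $t_1=\frac{r(1-\mu)}{1-\mu+r\mu}$, and then $it_1z_3$ will not reduce to $\frac{rs(-w+vi)}{\tau}$.

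With the correct relation it does. One has $\psi(x,\frac{w}{v}x)=\left((N(1-r)+sr)x-svr\right)^2$, so for $z_3=S_2\cap S_4=\frac{s}{v-N}(v+wi)$ you get $\mu=\frac{r(v-N)}{N(1-r)+sr}$. Then $t_1=\frac{r\mu}{r+\mu-r\mu}=\frac{r(v-N)}{\tau}$ and $it_1z_3=\frac{rs(-w+vi)}{\tau}$. In $S_1S_3S_4$ the same steps give $t_1=\frac{rs(v-s)}{N\tau}$, which is the paper's $t_2$. The $S_4$ computation is even avoidable. The real part of $Z_1+Z_2=2(x_0+iy_0)$ in your triangle reads $(t_1+t_2)\frac{sv}{v-N}=\frac{sv}{\tau}$, and together with $t_1/(t_1+t_2)=r$ this gives $t_1$ at once.

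The second gap is the choice of triangle. Theorem~\ref{MTC} needs $E_0$ tangent to the three segments, i.e.\ the triangle must contain $Q_{s,t,v,w}$. The triangle $S_1S_2S_4$ contains it iff $N<v$, since the $x$-coordinate of $S_2\cap S_4$ is $\frac{sv}{v-N}$. The triangle $S_1S_3S_4$ contains it iff $v>s$, since $S_1\cap S_3=\frac{N}{v-s}i$. These conditions are independent, so your two cases do not cover everything.

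Take $s=t=2$, $v=1$, $w=0$, which satisfies (\ref{R1}) and (\ref{R2}). Then $S_1\cap S_3=-2i$ and $S_2\cap S_4=-2$, so both of your triangles lie outside $Q$ and $E_0$ touches two of their sides only on extensions. The triangles containing $Q$ are $S_1S_2S_3$ and $S_2S_3S_4$, and neither has $0$ as a vertex.

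To close this case you have three options. You can redo the computation in one of those triangles. You can use the real-weight (Siebeck) version of Marden's theorem and its converse for conics tangent to the three side lines; your algebra is then unchanged, and the weights simply stop being positive. Or you can argue by continuation: the computation in Lemma~\ref{fociprod} shows $Z_1Z_2$ is a rational function of the coefficients of $\psi$, so an identity established on an open set of parameters holds on all of them. The paper's own proof does not settle this either, since it uses $S_1S_3S_4$ unconditionally, which is legitimate only when $v>s$.
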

    \begin{proof}Recall that $t_3=1-t_1-t_2$. The sides of $Q_{s,t,v,w}, S_1,S_3$, and $S_4$, form a triangle, $\triangle$, whose vertices are the complex points
    \begin{equation}z_1= \left(t-s\frac{t-w}{s-v} \right)i, z_2=0, z_3=v+wi\label{T1}\end{equation}
    and $F(z)=-\frac{i(v-s)p(z)}{(z(i(v-s)z+N)(-z+v+iw)}$, where $F$ is given by (\ref{F(z)}) and \begin{gather}p(z)=z^2-p_1z-p_0\\p_1=(t_1+t_2)v-
    \frac{v(t-w)t_1+w(s-v)t_2-N}{v-s}i,\nonumber\\p_0=\frac{N(v+iw)t_2}{i(v-s)}\nonumber. \label{t1t2}\end{gather}.
    Clearly $E_0$ is tangent to the sides of the triangle, $\triangle$, since $E_0$ is inscribed in $Q_{s,t,v,w}$.
    To prove (i), by Theorem \ref{MTC}(i), $Z_1$ and $Z_2$ are the zeros of $F(z)$ for some $t_1,t_2,t_3$ with $t_1t_2t_3>0$ and $\sum_{i=1}^{3} t_k=1$, and such that $z_1, z_2$, and $z_3$ are given by (\ref{T1}). Thus we may write $p(z)=(z-Z_1)(z-Z_2)=z^2-(Z_1+Z_2)z+Z_1Z_2$, and thus by (\ref{t1t2}) $Z_1+Z_2=p_1$. Since $Z_1$ and $Z_2$ are the foci of $E_0$ it also follows that $\frac{1}{2}(Z_1+Z_2)=(x_0,y_0)=$ the center of $E_0$. By Proposition \ref{ins}(ii) and (\ref{center}), $(t_1+t_2)v=\frac{sv}{\tau}$ and $-\frac{v(t-w)t_1+w(s-v)t_2-N}{v-s}=\frac{(s-N)r+vt}{\tau}$. Solving this system of equations for $t_1,t_2$ easily yields $t_1=s\frac{(s-v)r+N}{(N\tau}$ and $t_2=\frac{s(v-s)r}{(N\tau}$. Substituting for $t_1,t_2$ in (\ref{t1t2}) then yields (\ref{p}). To prove (ii), by Theorem \ref{MTC}(ii), $p$ has a double zero at $Z_1$. $p'(z)=2z-p_1 \Rightarrow p'(Z_1)=2Z-1-p_1=0 \Rightarrow p_1=2Z-1$. Arguing as in the non-circle case above again yields (\ref{p}). \end{proof}
    \begin{lemma}\label{wv}Suppose that $v=\frac{tI}{s \beta},w= 1-\frac{sv}{t}$, and $r=\frac{v}{s+v}$. Then $p\left(\frac{\beta-t}{2s}+\frac{1}{2}i\right)=p\left(\frac{v}{N+s}(s+it)\right)=0$, where $p$ is given by (\ref{p}).\end{lemma}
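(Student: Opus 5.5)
The plan is to verify the claim through Vieta's formulas rather than by substituting each point into $p$ separately. Write $Z_E=\frac{\beta-t}{2s}+\frac{1}{2}i$ and $Z_I=\frac{v}{N+s}(s+it)$; these are the complex forms of $EP$ from (\ref{EP}) and of $IP$ from (\ref{IP}). Since $p$ is monic of degree two, both points are roots of $p$ (counted as the two roots) once I show two things. First, $Z_E+Z_I$ equals minus the coefficient of $z$ in (\ref{p}). Second, $Z_EZ_I$ equals the constant term of (\ref{p}). I read the garbled display (\ref{p}) as
\[
p(z)=z^2+\frac{-sv+\left((N-s)r-vt\right)i}{\tau}z+\frac{rs(-w+vi)}{\tau}.
\]

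The first step is to simplify every ingredient using the hypotheses.

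\emph{The parameter $\tau$.} With $r=\frac{v}{s+v}$ we get $\tau=(s-v)r+v=\frac{2sv}{s+v}$.

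\emph{The quantity $N$.} With $w=1-\frac{sv}{t}$ we get $N=vt-ws=vt-s+\frac{s^2v}{t}$. Hence $N+s=\frac{v\beta}{t}$, and so $Z_I=\frac{t}{\beta}(s+it)$. This is the point $IP$ expressed in $s,t$ alone.

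\emph{The coefficients of $p$.} Minus the coefficient of $z$ becomes
\[
\frac{s+v}{2}+i\,\frac{\left(vt-(N-s)r\right)(s+v)}{2sv},
\]
and the constant term becomes $\frac{s+v}{2sv}\cdot\frac{sv}{s+v}(-w+vi)=\frac{1}{2}(-w+vi)$.

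Finally, substitute $v=\frac{tI}{s\beta}$ from (\ref{v}), together with $w=1-\frac{sv}{t}$ and the expression for $N$. After this, every quantity is a rational function of $s$ and $t$ only.

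The second step checks the sum $Z_E+Z_I$, which gives two real identities.

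\emph{Real part.} I need $\frac{\beta-t}{2s}+\frac{st}{\beta}=\frac{s}{2}+\frac{tI}{2s\beta}$. Multiplying by $2s\beta$ reduces this to $\beta^2-t\beta+2s^2t=s^2\beta+tI$. Both sides expand to $s^4+2s^2t^2+t^4-t^3+s^2t$, so the identity holds.

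\emph{Imaginary part.} I need $\frac{1}{2}+\frac{t^2}{\beta}=\frac{(vt-(N-s)r)(s+v)}{2sv}$. I will handle this the same way: substitute for $N$, $r$ and $v$, clear the denominators (powers of $s$, $t$, $\beta$, and $s\beta+tI$), and compare the resulting polynomials in $s,t$.

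The third step checks the product, which again gives two real identities:
\[
\mathrm{Re}(Z_EZ_I)=\frac{t}{\beta}\left(\frac{s(\beta-t)}{2s}-\frac{t}{2}\right)=\frac{t(\beta-2t)}{2\beta}\quad\text{must equal}\quad -\frac{w}{2},
\]
\[
\mathrm{Im}(Z_EZ_I)=\frac{t}{\beta}\left(\frac{t(\beta-t)}{2s}+\frac{s}{2}\right)\quad\text{must equal}\quad \frac{v}{2}.
\]
Both reduce to polynomial identities after substituting $w=1-\frac{sv}{t}$ and $v=\frac{tI}{s\beta}$.

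\emph{Imaginary part of the product.} The right side becomes $\frac{tI}{2s\beta}$, so the claim reduces to $t(\beta-t)+s^2=I$. Expanding the left side gives $t^3-t^2+s^2t+s^2=I$, which is immediate.

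\emph{Real part of the product.} The claim reduces to $t(\beta-2t)=-\beta+\frac{I}{\beta}\cdot\beta/\!\cdot$ after substitution, which becomes a short check.

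I expect the main obstacle to be bookkeeping rather than any conceptual step. The imaginary part of the sum involves $(N-s)r$ with $r=\frac{v}{s+v}$, and $v$ itself is a rational function of $s$ and $t$, so the common denominator there is the largest. My first move will be to rewrite $N-s=\frac{v\beta}{t}-2s$ before substituting $v$. This lets many factors of $v$ cancel against the $2sv$ in the denominator, so the identity should collapse to a low-degree polynomial check comparable to the real-part computation above.
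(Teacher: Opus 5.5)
Your proposal is correct. It organizes the verification differently from what the paper indicates, since the paper only says that substituting $EP$ and $IP$ into $p$ is a straightforward calculation.

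You check Vieta's relations instead, namely that $Z_E+Z_I=\frac{sv+((s-N)r+vt)i}{\tau}$ and $Z_EZ_I=\frac{rs(-w+vi)}{\tau}$. This is the same sum/product structure the paper uses in its Alternate Approach, where it combines the center formula of Proposition \ref{ins}(ii) with Lemma \ref{fociprod}. Since $p$ is monic, these relations give $p(z)=(z-Z_E)(z-Z_I)$, so the argument is fully rigorous. After your reductions $N+s=\frac{v\beta}{t}$, $\tau=\frac{2sv}{s+v}$ and $\frac{rs}{\tau}=\frac{1}{2}$, each check is a short identity. Evaluating $p$ directly at the two points would mean clearing much larger denominators.

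The two pieces you left open close quickly.

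\emph{Imaginary part of the sum.} Using $r(s+v)=v$,
\[
\frac{(vt-(N-s)r)(s+v)}{2sv}=\frac{t(s+v)-(N-s)}{2s}=\frac{1+t+w}{2},
\]
with no need to substitute for $v$. In particular the center is the vertex centroid $\left(\frac{s+v}{4},\frac{1+t+w}{4}\right)$. Equality with $\frac{1}{2}+\frac{t^2}{\beta}$ is then equivalent to $w=-\frac{t(\beta-2t)}{\beta}$.

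\emph{Real part of the product.} Your line here is garbled; it should read $t(\beta-2t)=I-\beta$. This is the same condition as above, because $w=1-\frac{sv}{t}=1-\frac{I}{\beta}$. It holds since $I-\beta=t^3-2t^2+s^2t=t(\beta-2t)$.

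So the whole lemma rests on three checks: the identity $I-\beta=t(\beta-2t)$, the identity $t(\beta-t)+s^2=I$, and the expansion you already did for the real part of the sum.
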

    \begin{proof}This is a straightforward algebraic calculation and we omit the details.\end{proof}
    \textbf{Proof of Theorem Besant}\newline We prove (i) and (ii), with (iii) proven in \S \ref{Trapezoids}.
    \begin{proof}As noted in the proof of Proposition \ref{orthomp}, it suffices to assume that $Q=Q_{s,t,v,w}$ for some $s,t,v,w$ satisfying (\ref{R1}) and (\ref{R2}). To prove (i), suppose first that one focus of $E_0$ equals
    $EP=\frac{\beta-t}{2s}+\frac{1}{2}i$, which implies that $p(EP)=0$ by Lemma \ref{focip}, where $p$ is given by (\ref{p}). Define the following expressions in $s,t,v,w$: \begin{gather}a_{O_1} = s^3-vs^2+t(t-2w)s+t(t-2)v\nonumber\\a_{O_2} = (t+2w-1)s^3-v(1+3t)s^2+ t^2(t-1)(s-v).\nonumber\end{gather}Now \begin{gather}p(EP)=\frac{(s+(t-1)i)(sO_1-iO_2)}{4s^2 \tau}\\O_1=a_{O_1}r-v(\beta-2t), O_2=a_{O_2}r+vI.\nonumber\end{gather}\label{pP1}Since $s \neq 0$ by (\ref{R1}), $p(EP)=0 \iff sO_1-iO_2=0$, which then implies $O_1=O_2=0$. That, in turn, gives $a_{O_1}a_{O_2}r-v(\beta-2t)a_{O_2}=0$ and $a_{O_1}a_{O_2}r+vIa_{O_1}=0$. Subtracting and dividing thru by $v$ yields $a_{O_1}I+(\beta-2t)a_{O_2}=0$, which holds $\iff a_5v+a_6=0$, where \begin{gather}a_5=s(I+ t \beta-2t^2)\\a_6=(I+ (t^2-s^2)(\beta-2t+1))w-t \beta(\beta-2t+1)\nonumber\label{a5a6}\end{gather}
    Assume first that $a_5 \neq 0$: Then $v=-\frac{a_6}{a_5}$, and substituting for $v$ in (\ref{cyc}) yields \begin{equation}
    CYC=\frac{\beta(\beta-2t+1)(I-\beta w)(t(\beta-2t)+\beta w)}{s(t(\beta-2t)+I)^2}.\end{equation} Since $CYC=0$, we have $I-\beta w=0$, which implies that $w=\frac{I}{\beta}$, or $t(\beta-2t)+\beta w=0$, which implies that $w=-\frac{t(\beta-2t)}{\beta}$. The first value of $w$ gives $a_6= -\frac{s^2(\beta-2t)(t(\beta-2t)+I)}{\beta}$, which implies that $v=\frac{s(\beta-2t)}{\beta}$, which in turn yields $vt-ws=\frac{s(\beta-2t)-I)}{\beta} = -s<0$.
    By (\ref{R1}) we must have the second value of $w$, which gives $a_6= -t\frac{I(t(\beta-2t)+I))}{\beta}$ and it follows that $v$ is given by (\ref{v}). It then follows immediately that $1-\frac{sv}{t}=-\frac{t(\beta-2t)}{\beta}$ and thus $Q_{s,t,v,w}$ is orthodiagonal by Lemma \ref{cycorth}(ii). Substituting for $v$ and for $w$ then yields $a_{O_1}=(\beta-2t)\frac{s^2(\beta)+tI}{s \beta}$, which implies that $O_1=\frac{(\beta-2t)}{s\beta}(s^2 \beta r+tI(r-1)$. Setting $O_1=0$ and solving for $r$ gives $r=\frac{tI}{s^2 \beta+tI}$. Using (\ref{v}) and simplifying yields $r=\frac{v}{s+v} \in J=(0,1)$. One obtains the same result for $r$ by setting $O_2=0$, though setting $O_1=0$ is sufficient. Since $IP=\frac{v}{N+s}(s+it)$, by Lemma \ref{wv} we have $p(IP)=0$.\newline
    Assume now that $a_5=0$: Since $s \neq 0$ by (\ref{R1}),
    (\ref{a5a6}) implies that $I+ t \beta-2t^2=0$, and thus \begin{gather}s^2=Y,\nonumber\\Y=\frac{3t^2-2t^3}{2t+1}\nonumber.\end{gather}Since $s>0$, we have \begin{equation}s= \sqrt Y.\label{st}\end{equation} Substituting for $s^2$ then yields the following: \begin{gather}t^2-s^2 =2t^2\frac{(2t-1}{1+2t},I=\frac{2t^2}{1+2t}\\\beta=\frac{4t²}{1+2t},\beta-2t+1=\frac{1}{1+2t}\nonumber.\label{ssq}\end{gather} By (\ref{ssq}) we have $a_6=\left(\frac{2t^2}{1+2t}+2t^2\frac{(2t-1}{1+2t}\frac{1}{1+2t}\right)w-t\frac{4t^2}{1+2t}=\frac{4t^3(2w-1)}{(1+2t)^2}$. Now $a_5v+a_6=0 \Rightarrow a_6=0$, which implies that $w=\frac{1}{2}$. Substituting for $s$ using (\ref{st}) and $w=\frac{1}{2}$ in (\ref{cyc}) yields \begin{equation}CYC=vt\frac{2t-1}{2t+1}-\frac{4v^2-1}{4} \sqrt Y\label{CYC}.\end{equation}Note that if $v=\frac{1}{2}$, then using (\ref{CYC}) and $CYC=0$ gives $t=\frac{1}{2}$, which in turn implies that $s=\frac{1}{2}$ by (\ref{st}), which contradicts the assumption that $v \neq s$. Thus we may assume that $v \neq \frac{1}{2}$. Then \begin{equation}CYC=0 \Rightarrow \sqrt Y=\frac{4vt(2t-1)}{(2t+1)(4v^2-1)}\label{2}.\end{equation} Squaring both sides, simplifying, and factoring gives $t^2p_1(t,v)p_2(t,v)=0$, where $p_1(t,v)=4(2t+1)v^2+2t-3$ and $p_2(t,v)=4(2t-3)v^2+2t+1$.\newline
    case 1: $p_1(t,v)=0$. Then $t=q_1(v)=\frac{1}{2}\frac{3-4v^2}{4v^2+1}$. Substituting for $t$ gives $Y=4v^2q_1^2(v) \Rightarrow
    Y^\frac{1}{2}=2vq_1(v)$ since $t>0 \Rightarrow 3-4v^2>0$. $t=q_1(v)$ also implies that $\frac{4vt(2t-1)}{(2t+1)(4v^2-1)}=-2vq_1(v)$ , which implies that (\ref{2}) cannot hold unless $q_1(v)=0 \iff v=\frac{\sqrt3}{2}$. But then $t=0$. Thus case 1 cannot hold.\newline
    case 2: $p_2(t,v)=0$. Then \begin{equation}t=q_2(v)=\frac{1}{2}\frac{12v^2-1}{4v^2+1}\label{t}\end{equation} and $Y=\frac{1}{4v^2}q_2^2(v) \Rightarrow Y^\frac{1}{2}=\frac{1}{2v}q_2(v)$ since $t>0 \iff 12v^2-1>0$. It then follows easily from (\ref{CYC}) that $CYC=0$.
    Also, \begin{equation}s=\frac{1}{2v}q_2(v)\label{s}\end{equation} and using both (\ref{s}) and (\ref{t}) gives $1-\frac{sv}{t}= \frac{1}{2}=w$ and thus $Q_{s,t,v,w}$ is orthodiagonal by Lemma \ref{cycorth}(ii). That also gives $a_{O_1}=-\frac{16v^4+16v^2-1}{32v^3}q_2(v)$ and setting $O_1=a_{O_1}r-v (\beta-2t) =0$ gives $r=\frac{4v^2(1+4v^2)}{16v^4+16v^2-1}=\frac{v}{s+v} \in J=(0,1)$.
    Now by (\ref{ssq}), $\frac{tI}{s \beta}=\frac{t}{s}\frac{2t^2}{1+2t}\frac{1+2t}{4t^2}=\frac{t}{2s}=v$, and again by Lemma \ref{wv} we have $p(IP) = 0$. Thus if $a_5 \neq 0$ or $a_5=0$, $IP$ is the other focus of $E_0$, which proves that if one focus of $E_0$ equals $EP$, then the other focus of $E_0$ equals $IP$.
    Conversely, suppose that one focus of $E_0$ equals $IP$, which implies that $p(IP)=0$. Define the following expressions in $s,t,v,w$: \begin{gather}
    a_{O_3} = s(sw-2vt)(w-1)+ t(t-1)v^2\nonumber\\a_{O_4} = (v^2-w(w-1^2)s^3+ s^2v( t(w-1)^2-v^2)+t^2v^2(w-1)s-t^2(t-1)v^3.\nonumber\end{gather}
    Now $p(IP)=p\left(\frac{v}{N+s}(s+it)\right)=\frac{O_3+2svO_4i}{(N+s)^2 \tau}$, where $O_3=a_{O_3}r+ vt(v-s-N), O_4=a_{O_4}r+v^2(s^2-t^2)(v-s-N)$. $p(IP)=0 \iff O_3=O_4=0$. $O_3=0 \Rightarrow a_{O_3}a_{O_4}r+ vt(v-s-N)a_{O_4}=0$ and $O_4=0 \Rightarrow a _{O_3}a_{O_4}r+v^2(s^2-t^2)(v-s-N)a_{O_3}=0$. Subtracting gives $vt(v-s-N)a_{O_4}-v^2(s^2-t^2)(v-s-N)a_{O_3}=0$, and after simplifying, one has $svN(N+s)(N+s-v)(sv+tw-t)=0$. By \ref{R1} and \ref{R2}, the factors $s,v,N,N+s$, and $N+s-v$ are each positive and thus non-zero. Thus $sv+tw-t=0$, which implies that $w=1-\frac{sv}{t}$ and again $Q_{s,t,v,w}$ is orthodiagonal by Lemma \ref{cycorth}(ii). By Lemma \ref{orth}, $H=0$, which implies that {v} is given by (\ref{v}). Substituting for $v$ and for $w$ then yields $a_{O_3}=\frac{t(\beta-t)(s+v)I}{s \beta}$. As noted above, $v-s-N \neq 0$, which implies that $a_{O_3}\neq 0$ as well since $O_3=0$. Hence, setting $O_3=0$ implies that $r=-\frac{( vt(v-s-N)}{a_{O_3}}=
    -\frac{(stv \beta (v-s-N)}{t(\beta-t)(s+v)I}$, which simplifies to $\frac{v}{s+v}\in J $ using (\ref{v}). One obtains the same result for $r$ by setting $O_4=0$, though setting $O_3=0$ is sufficient. Since $EP=\frac{\beta -t}{2s}+\frac{1}{2}i$, by Lemma \ref{wv} we have $p(EP) = 0$. That proves that if one focus of $E_0$ equals $IP$, then the other focus of $E_0$ equals $EP$. Since the foci of $E_0$ are at $EP$ and at $IP$, the center of $E_0$ equals $\frac{1}{2}(EP+IP)=C_0=$ center of the circle inscribed in $Q_{M}$ by Proposition \ref{orthomp}. That proves Theorem \ref{Besant}(i). To prove (ii), since $Q_{s,t,v,w}$ is orthodiagonal, $w=1-\frac{sv}{t}$. As done above, $H=0$ by Lemma \ref{orth}, which implies that {v} is given by (\ref{v}). Let $E_0$ be the ellipse with equation \ref{psi} and with $r=\frac{v}{s+v}$. Then (ii) follows immediately by Lemma \ref{wv}. \end{proof}
    \begin{remark}In the beginning of the proof above, one could also look at the triangle formed by $S_2,S_3$, and $S_4$, which has vertices
    $z_1=\frac{s(v+wi)}{sw-(t-1)v},z_2=s+ti$, and $z_3=v+wi$. This leads to the same polynomial as $p(z)$ above. In fact, we do that below for the case when $Q$ is a trapezoid.\end{remark}
    \begin{Example}\label{EX}Let $s=2, t=4,v=\frac{34}{5}$, and $w=-\frac{12}{5}$: Then $EP= \left(4,\frac{1}{2}\right)$ and $IP=\left(\frac{2}{5},\frac{4}{5}\right)$. Letting $r=\frac{17}{22}$ in Proposition \ref{ins} yields the inscribed ellipse, $E_0$, with equation $649x^2+216xy+1936y^2-2996x-2992y=- 1156$. The foci of $E_0$ are $EP$ and $IP$ and thus $E_0$ is a Besant ellipse inscribed in $Q_{s,t,v,w}$, which is thus a Besant quadrilateral. See \ref{Figure1} above.\end{Example}
    \section{Trapezoids}\label{Trapezoids}\begin{proof}(of Theorem \ref{Besant}(iii)). Earlier, when working with non-trapezoids, it sufficed to consider the special quadrilateral $Q_{s,t,v,w}$, the convex quadrilateral with vertices $(0,0),(0,1),(s,t)$, and $(v,w)$. Again, by using an appropriate isometry, if $Q$ is a trapezoid, it suffices to consider $Q_{s,t,v,w}$ with $v=s$, which we denote by $Q_{s,t,s,w}$. Note that if $v=s$ and $t=w$, then $(s,t)=(v,w)$. So if $v=s$, then $t \neq w$, and thus (\ref{R1}) is equivalent to $s>0,t>w$. It is also easy to show that $Q_{s,t,s,w}$ is a parallelogram $\iff t-w-1=0$. Since we assumed that $Q$ is not a parallelogram(and hence $Q_{s,t,s,w}$ as well), it follows that $t-w-1 \neq 0$. Now for (\ref{R2}), we no longer assume $s \neq v$, of course. $N+s-v>0$ and $N>0$ are identical and equivalent to $t>w$, which has already been assumed. $N \neq v$ is equivalent to $t-w-1 \neq 0$, which has already been assumed above. Finally, Proposition \ref{cycorth} holds even if $Q_{s,t,v,w}$ is a trapezoid. In that case, substituting $v=s$ gives $CYC=s(t+w-1)(t-w)$. Since $t \neq w$ and $s \neq 0$, $CYC=0 \iff w=1-t$. That yields the cyclic trapezoid $Q_{s,t,s,1-t}$. Also, $t>w$ becomes $t>frac{1}{2}$ and $t-w-1 \neq 0$ becomes $t \neq 1$. Summarizing,
    If $Q_{s,t,s,w}$ is a not a parallelogram, then $Q_{s,t,s,w}$ is cyclic if and only if \begin{equation}w=1-t, t>\frac{1}{2},t \neq 1\label{cyctrap}\end{equation}
    Now suppose that $E_0$ is an ellipse inscribed in $Q_{s,t,s,1-t}$ and that $E_0$ is not a circle. One can show directly, or by letting $v=s$ and $w=1-t$ in Proposition \ref{ins}(i), that the equation of $E_0$ is given by $\psi(x,y)=0$, where $\psi(x,y)=A(r)x^2+B(r)xy+C(r)y^2+D(r)x+E(r)y+F(r)$, and $A(r)=4(t-1)^2r^2-4(t-1)^2r+t^2, B(r)=2st(2r-1), C(r)=s^2, D(r)=2rs( 2(t-1)r-3t+2),E(r)=-2s^2r,F(r)=s^2r^2, r \in J=(0,1)$. The sides of $Q_{s,t,s,1-t}, S_2,S_3$, and $S_4$, form a triangle whose vertices are the complex points $z_1=\frac{s}{2(1-t)}+\frac{1}{2}i, z_2=s+ti$, and $z_3=s+(1-t)i$. As in the proof for the non-trapezoid case, let $F(z)=\frac{t_1}{z-z_1}+\frac{t_2}{z-z_2}+\frac{t_3}{z-z_3},\sum_{i=1}^{3} t_k=1$, and let $Z_1$ and $Z_2$ denote the zeros of $F(z)$. Let $p_{T}(z)=(z-Z_1)(z-Z_2)$. Proceeding as in the proof of Lemma \ref{focip}(we omit all of the details), $Z_1$ and $Z_2$ are the foci of $E_0$ and one can show that \begin{gather}\frac{p_{T}}{2(t-1)}=p(z), \label{PT}\\p(z)=z^2+(-s+(2rt-2r-t) i)z+\nonumber\\i(s+i(1-t))r,r \in J=(0,1)\nonumber.\end{gather}
    Alternatively, one can use a limiting argument as $v \rightarrow s$. Then letting $v=s$ and $w=1-t$ in (\ref{p}) yields (\ref{PT}) above. For $Q_{s,t,s,1-t}$, the vertices are $(0,0),(0,1),(s,t)$, and $(s, 1-t)$, which implies that $IP= \left(\frac{s}{2t},\frac{1}{2}\right)$ and $EP= \left(\frac{s^2+t^2-t}{2s},\frac{1}{2}\right)$. Using complex notation, $p(EP)=p\left(\frac{s^2+t^2-t}{2s}+\frac{1}{2}i\right)=\frac{t^2-s^2+2st(2r-1)i}{4s^2}((t-1)^2+s^2)$. Thus $p(EP)=0 \iff t^2-s^2t=0$ and $2st(2r-1)i=0 \iff$ \begin{equation}s=t \hspace{1cm} \text{and} \hspace{1cm} r=\frac{1}{2}.\label{str}\end{equation}
    $p(IP)=p\left(\frac{1}{2}+\frac{1}{2}i\right)=\frac{1}{2}i((s+t-1)(2r-1)+(s-t)i)=0 \iff$ (\ref{str}) holds. Clearly, then, $p(EP)=0 \iff p(IP)=0 \iff$ \ref{str} holds. $Q_{s,t,s,1-t}=Q_{t,t,t,1-t}$ now has vertices $(0,0),(0,1),(t,t)$, and $(t, 1-t)$, which implies that the diagonals are perpendicular and hence $Q_{t,t,t,1-t}$ is orthodiagonal. Arguing as in the non-trapezoid case, the center of $E_0$ equals the center of the circle inscribed in $Q_{M}$. That proves Theorem \ref{Besant}(i) when $Q$ is a trapezoid. To prove Theorem \ref{Besant}(ii), if $Q_{s,t,s,w}$ is an orthodiagonal quadrilateral, then by Proposition \ref{cycorth}(ii), $w=1-\frac{sv}{t}=1-\frac{s^2}{t}$. Also, by (\ref{cyctrap}) $w=1-t$, which implies that $1-\frac{s^2}{t}=1-t \Rightarrow (t-s)\frac{(t+s}{t}=0 \Rightarrow s=t$. Arguing as above, it follows easily that if $r=\frac{1}{2}$, then $E_0$ is an ellipse inscribed in $Q_{t,t,t,1-t}$ whose foci are at $EP$ and at $IP$. Finally, to prove Theorem \ref{Besant}(iii), first, $Q_{t,t,t,1-t}$ is clearly an isoceles trapezoid. Second, let $a$ and $b$ denote the lengths of the semi-major and semi-minor axes, respectively, of $E_0$, where $E_0$ is an ellipse inscribed in $Q_{t,t,t,1-t}$. By (\cite{ref5}, Lemma A.2), $a^2b^2= -\frac{1}{4}t^2(2t-1)( r^2-r)$, which attains its maximum when $r=\frac{1}{2}$. Since $Q_{s,t,s,1-t}$ is a Besant quadrilateral if and only if (\ref{str}) holds, the corresponding Besant ellipse is the ellipse of maximal area inscribed in $Q_{t,t,t,1-t}$.\end{proof}
    \textbf{Example}: Consider the isoceles trapezoid $Q_{t,t,t,1-t}$ with $t=2$ and the ellipse, $E_0$, inscribed in $Q_{2,2,2,-1}$. $E_0$ has equation $3x^2+4y^2-6x-4y= -1$ and the foci of $E_0$ are $EP=\left(\frac{3}{2},\frac{1}{2}\right)$ and $IP=\left(\frac{1}{2},\frac{1}{2}\right)$. Thus $E_0$ is a Besant ellipse inscribed in $Q_{2,2,2,-1}$.
    \begin{figure}[H]\includegraphics{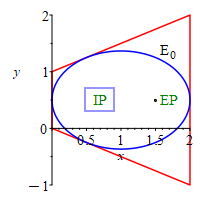}\end{figure}
    \begin{remark}If $Q$ is a Besant quadrilateral, but not a trapezoid, then a Besant ellipse for $Q$ is not necessarily the ellipse of maximal area inscribed in $Q$. See example\ref{EX} done above: $r=\frac{17}{22}$ yields the Besant ellipse for $Q_{2,4,34/5,-12/5}$, while $r=\frac{1}{123}(-151+ \sqrt64621)$ yields the ellipse of maximal area inscribed in $Q_{2,4,34/5,-12/5}$.\end{remark}
    \section{Alternate Approach}
    Here we give a somewhat different approach to the proof of the first part of Theorem \ref{Besant}(i). The other parts of the proof of Theorem \ref{Besant} would follow similarly. We use the formula below for the foci of an ellipse given its coefficients(we leave out the details of the derivation). Recall that $\tau=(s-v)r+v$.
    \begin{theorem}\label{foci}Let $E_0$ be an ellipse which is not a circle and with equation $\phi(x,y)=0$, where $\phi(x,y)=Ax^2+Bxy+Cy^2+Dx+Ey+F, A,C>0.$ Let $F_1$ and $F_2$ denote the foci of $E_0$, with $F_2=(x_{c},y_{c})$ the rightmost focus(if the rotation angle equals $\frac{\pi}{2}$, we let $F_2$ denote the uppermost focus) and let $(x_0,y_0)$ be the center of $E_0$. Let $\Delta=4AC-B^2,\delta=CD^2+AE^2-BDE-F\Delta, \mu=\frac{4\delta}{\Delta^2},M=(A-C)^2+B^2, k_{A}=\frac{\mu}{2}(C-A+\sqrt M)$, and $k_{C}=\frac{\mu}{2}(A-C\sqrt M)$.\newline
    (i) If $B \neq 0$, then the foci of $E_0$ are given by \begin{gather}F_1=(x_0-\sqrt k_{A},y_0+(sgnB)\sqrt k_{C})\nonumber\\F_2=(x_0+\sqrt k_{A},y_0-(sgnB)\sqrt k_{C})\nonumber\end{gather}
    (ii) If $B=0$, then the foci are $(x_0,y_0 \pm \sqrt k_{C})$ if $A>C$, and $(x_0 \pm \sqrt k_{A},y_0)$ if $A<C$.
    \end{theorem}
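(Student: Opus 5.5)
The plan is to reduce to the standard form of an ellipse by translating to the center and rotating to principal axes, then read off the focal distance and the direction of the major axis.

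\textbf{Translation.} First, the center $(x_0,y_0)$ is the unique critical point of $\phi$. Solving $\phi_x=\phi_y=0$ gives
\[
x_0=\frac{BE-2CD}{\Delta},\qquad y_0=\frac{BD-2AE}{\Delta},
\]
and $\Delta>0$ because $E_0$ is an ellipse. Substituting $x=x_0+X$, $y=y_0+Y$ kills the linear terms. A short computation, using $\phi(x_0,y_0)=\frac{1}{2}(Dx_0+Ey_0)+F$, gives the new constant $\phi(x_0,y_0)=-\delta/\Delta$. So $E_0$ becomes
\[
AX^2+BXY+CY^2=\frac{\delta}{\Delta},
\]
and $\delta/\Delta>0$ since the conic is a real, nondegenerate ellipse with $A,C>0$.

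\textbf{Focal distance.} Next, diagonalize the symmetric matrix $\begin{pmatrix}A&B/2\\B/2&C\end{pmatrix}$. Its eigenvalues are $\lambda_\pm=\frac{1}{2}(A+C\pm\sqrt M)$, with $\lambda_+\lambda_-=\Delta/4$ and $\lambda_+-\lambda_-=\sqrt M$. The semi-axes satisfy $a^2=\frac{\delta}{\Delta\lambda_-}$ and $b^2=\frac{\delta}{\Delta\lambda_+}$. Hence the focal distance $c$ satisfies
\[
c^2=a^2-b^2=\frac{\delta}{\Delta}\cdot\frac{\lambda_+-\lambda_-}{\lambda_+\lambda_-}=\frac{4\delta}{\Delta^2}\sqrt M=\mu\sqrt M .
\]
This is positive because $E_0$ is not a circle, so $M>0$.

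\textbf{Direction of the major axis.} The foci lie at $(x_0,y_0)\pm c\,(\cos\theta,\sin\theta)$, where $(\cos\theta,\sin\theta)$ is a unit eigenvector for the smaller eigenvalue $\lambda_-$. The standard half-angle computation gives
\[
\cos^2\theta=\frac{1}{2}\Bigl(1+\frac{C-A}{\sqrt M}\Bigr),\qquad \sin^2\theta=\frac{1}{2}\Bigl(1-\frac{C-A}{\sqrt M}\Bigr).
\]
Multiplying by $c^2=\mu\sqrt M$ yields $c^2\cos^2\theta=k_A$ and $c^2\sin^2\theta=\frac{\mu}{2}(A-C+\sqrt M)=k_C$. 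I read the stated $k_C$ as having a dropped minus sign, i.e. $k_C=\frac{\mu}{2}(A-C+\sqrt M)$.

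\textbf{Signs, case (i).} If $B\neq 0$, take the eigenvector $(B/2,\ \lambda_--A)$ and note that $\lambda_--A=\frac{1}{2}(C-A-\sqrt M)<0$. So the $x$- and $y$-components of the major-axis direction have opposite signs when $B>0$ and the same sign when $B<0$. Choosing the representative with positive $x$-component gives the rightmost focus
\[
F_2=\bigl(x_0+\sqrt{k_A},\ y_0-(\sgn B)\sqrt{k_C}\bigr),
\]
and $F_1$ is its reflection through the center.

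\textbf{Case (ii).} If $B=0$, the axes are the coordinate axes. When $A<C$ we have $k_C=0$ and the major axis is horizontal, giving foci $(x_0\pm\sqrt{k_A},y_0)$. When $A>C$ we have $k_A=0$ and the axis is vertical, giving foci $(x_0,y_0\pm\sqrt{k_C})$.

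The main obstacle is this sign bookkeeping: correctly tying $\sgn B$ to the orientation of the major axis and matching it to the ``rightmost'' convention, including the vertical-axis convention. The algebra for $\phi(x_0,y_0)=-\delta/\Delta$ is routine but must be checked carefully.
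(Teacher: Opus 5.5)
Your proof is correct. The steps all check out:
\begin{itemize}
\item Translating to the center gives $AX^2+BXY+CY^2=\delta/\Delta$, via $\phi(x_0,y_0)=\frac12(Dx_0+Ey_0)+F=-\delta/\Delta$.
\item The focal distance satisfies $c^2=\mu\sqrt M$.
\item The major axis is the $\lambda_-$ eigendirection, with $\cos^2\theta=\frac12\bigl(1+\frac{C-A}{\sqrt M}\bigr)$.
\item The eigenvector $(B/2,\lambda_--A)$, with $\lambda_--A<0$ when $B\neq 0$, correctly fixes the sign of the $y$-offset of the rightmost focus.
\end{itemize}
Your reading $k_C=\frac{\mu}{2}(A-C+\sqrt M)$ is the intended one. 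It is exactly what the paper itself relies on in the proof of Lemma \ref{fociprod}, where it uses $\sqrt M+A-C>0$ and $k_Ak_C=\frac{\mu^2}{4}\bigl(M-(A-C)^2\bigr)$.

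The paper gives no proof of this theorem; it says only ``we leave out the details of the derivation.'' So there is no competing route to compare, and your principal-axes argument supplies the missing derivation.

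One byproduct of your setup is worth recording. Write $z_0=x_0+iy_0$. Your foci are $z_0\pm c\,e^{i\theta}$, with $\cos 2\theta=(C-A)/\sqrt M$ and $\sin 2\theta=-B/\sqrt M$. Hence the two foci are exactly the solutions of $(Z-z_0)^2=\mu\,(C-A-iB)$. This one identity gives $Z_1Z_2=z_0^2-\mu(C-A)+i\mu B$ uniformly, including the case $B=0$, which the paper's proof of Lemma \ref{fociprod} sets aside. For that use of the theorem (the product of the foci), it removes the $\sgn B$ bookkeeping you identified as the main obstacle. You only need the sign analysis if you want to say which focus is the rightmost one.
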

    \begin{lemma}[fociprod]\label{fociprod}Let $Z_1$ and $Z_2$ denote the foci of an ellipse, $E_0$, inscribed in $Q_{s,t,v,w}$, where $Z_1$ and $Z_2$ are written as complex numbers. Then $Z_1Z_2=\frac{rs(-w+vi)}{\tau}$.
    \end{lemma}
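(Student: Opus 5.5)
The plan is to work in the \enquote{Alternate Approach} spirit and compute the product of the foci directly from the coefficients of the ellipse, without Marden's Theorem. For an ellipse with center $z_0=x_0+iy_0$ and foci $Z_1,Z_2$, write $Z_{1,2}=z_0\pm c$, where $c$ is the complex number pointing from the center to a focus. Then $Z_1Z_2=z_0^2-c^2$. So it suffices to express both $z_0^2$ and $c^2$ in terms of the coefficients $A(r),\dots,F(r)$ of Proposition \ref{ins}(i). The advantage of $c^2$ is that it avoids the sign and branch ambiguities in Theorem \ref{foci}. Indeed, by Theorem \ref{foci}, $c=\sqrt{k_A}-i(\sgn B)\sqrt{k_C}$ up to sign, so
\[
c^2=k_A-k_C-2i(\sgn B)\sqrt{k_Ak_C}.
\]
From the definitions, $k_A-k_C=\mu(C-A)$ and $k_Ak_C=\frac{\mu^2}{4}\bigl(M-(A-C)^2\bigr)=\frac{\mu^2B^2}{4}$. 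Since $\mu>0$ for a real ellipse, we get $(\sgn B)\sqrt{k_Ak_C}=\frac{\mu B}{2}$. This gives the clean formula
\[
c^2=\mu\,(C-A-iB),\qquad \mu=\frac{4\delta}{\Delta^2},
\]
which also covers the case $B=0$ in Theorem \ref{foci}(ii).

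Next, for the center I use Proposition \ref{ins}(ii): $z_0=\frac{sv+i\left((s-N)r+vt\right)}{2\tau}$. For $\mu$, I would use the standard fact that after translating to the center, the constant term becomes $F'=\phi(x_0,y_0)=-\delta/\Delta$. This suggests computing $\delta/\Delta$ as $-\psi(x_0,y_0)$, which is easier than expanding $\delta$ directly.

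So the steps, in order, are:
\begin{enumerate}
\item Compute $\Delta(r)=4A(r)C(r)-B(r)^2$ with $C=s^2v^2$. I expect it to factor as a constant times $r(1-r)$ times a polynomial related to $\tau$, since the inscribed family degenerates at $r=0,1$.
\item Compute $\psi(x_0,y_0)$ and hence $\mu=-4\psi(x_0,y_0)/\Delta$.
\item Form $z_0^2-\mu(C-A-iB)$ and simplify, checking that it reduces to $\frac{rs(-w+vi)}{\tau}$.
\end{enumerate}
As a sanity check, I would first verify the identity on Example \ref{EX} with $r=\frac{17}{22}$. There $Z_1Z_2=EP\cdot IP=(4+\tfrac12 i)(\tfrac25+\tfrac45 i)$, and this should match the right-hand side.

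The main obstacle is the algebra in the last step. The expression $z_0^2-\mu(C-A-iB)$ is a rational function of $r,s,t,v,w$ with denominator involving $\tau^2\Delta$, and one must see the cancellation down to a denominator of $\tau$. I would handle this by separating real and imaginary parts and clearing denominators. Each part then becomes a polynomial identity in $r$ of low degree (at most 4), which can be checked coefficient by coefficient, using $N=vt-ws$ to organize the terms.

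If this bookkeeping becomes unwieldy, the fallback is to note that the lemma is already implicit in Lemma \ref{focip}. There the foci are the roots of $p(z)$, so $Z_1Z_2$ equals its constant term, which is exactly $\frac{rs(-w+vi)}{\tau}$. The circle case holds by continuity, or by Lemma \ref{focip}(ii).
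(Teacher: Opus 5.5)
Your proposal is correct and follows the same route as the paper: use Theorem \ref{foci} to write $Z_1Z_2=z_0^2-c^2$, reduce $(\sgn B)\sqrt{k_Ak_C}$ to $\mu B/2$ via $k_Ak_C=\mu^2B^2/4$, and then substitute the coefficients from Proposition \ref{ins}. Your packaging $c^2=\mu(C-A-iB)$ is a small improvement, because it removes the square roots and also covers the $B=0$ case that the paper's proof omits. Your fallback of reading off the constant term of $p(z)$ from Lemma \ref{focip} is also logically valid, though it brings back the Marden machinery that this section is meant to avoid.
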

    \begin{proof}: We prove the case when $B \neq 0$. Then $\sqrt {M} > \lvert(A-C)\rvert \Rightarrow \sqrt {M}+C-A>0$ and $\sqrt {M}+A-C>0$. Since $E_0$ is an ellipse, it follows easily that $ \Delta >0$ and $ \delta >0$(see, for example, \cite{ref10} or \cite{ref11}). Thus $ \mu >0$, which implies that $k_{A}$ and $k_{C}$ from Theorem \ref{foci} are both positive. Now $k_{A}k_{C}=\frac{\mu^2}{4}(M-(A-C)^2)=\frac{\mu^2}{4}B^2$, which implies that $\sqrt (k_{A}k_{C})=\frac{1}{2}( \mu \lvert B \rvert)$. Now by Theorem \ref{foci}, $Z_1Z_2=x^2-y^2+k_{C}-k_{A}+2(x_0y_0+\sqrt (k_{A}k_{C})(\sgn B))i$. Using Proposition \ref{ins}(the coefficents A-F in Proposition \ref{ins} depend on $r$, but for convenience of notation we suppress that dependence here) and simplifying, one has $Z_1Z_2=-\frac{rsw}{\tau}+\frac{svr}{\tau}i$.
    \end{proof}
    \subsection{Alternate Proof}
    \begin{proof}Again, we may assume that $Q=Q_{s,t,v,w}$ for some $s,t,v,w$ satisfying (\ref{R1}) and (\ref{R2}). Recall that $\beta=s^2+t^2-2t$. To prove (i), suppose first that one focus, $Z_1$, of $E_0$ equals $EP=a+\frac{1}{2}i$, where $a=\frac{\beta}{2s}$ by (\ref{EP}). Suppose that the other focus is given by $Z-2=c+di$ for some real numbers $c,d$. Then $Z_1Z_2=ac-\frac{1}{2}d+\left(ad+\frac{1}{2}c\right)i$ and by Lemma \ref{fociprod}, $ac-\frac{1}{2}d=\frac{-rsw}{\tau}$ and $\frac{1}{2}c=\frac{rsv}{\tau}$. Let $ \sigma=\frac{2s^2r}{(\beta)(\beta-2t+1)\tau}$. Solving for $c$ and $d$ and substituting for $a$ yields $c=c_1$, where $c_1=-((\beta-t)w-sv) \sigma$ and $d_1=-((\beta-t)v+sw) \sigma$. We also have $Z_1+Z_2=2x_0+2y_0i$, where $(x_0,y_0)$ is the center of $E_0$. Since $Z_1+Z_2=a+c+\left(d+\frac{1}{2}\right)i, a+c=2x_0$ and $d+\frac{1}{2}=2y_0$, which implies that $c=2x_0-a=c_2$, where $c_2=\frac{(\beta-t)(v-s)r+v(s^2-t^2+t)}{2s\tau}$ and $d=2y_0-(1/2)=d_2$ , where
    $d_2=\frac{(s-2vt+v+2ws)r+(2t-1)v}{2\tau}$. Setting $c_2-c_1=0$ and solving for $r$ yields $r=r_{c}$, a rational function in $s,t,v,w$. Similarly, setting $d_2-d_1=0$ and solving for $r$ yields $r=r_{d}$,also a rational function in $s,t,v,w$. We leave out the details. Since both equations $Z_1Z_2=ac-\frac{1}{2}d+\left(ad+\frac{1}{2}c\right)i$ and $Z_1+Z_2=a+c+\left(d+\frac{1}{2}\right)i$ must hold, $r_{c}=r_{d}$. Now $r_{d}-r_{c}$ is a rational function in $s,t,v,w$ whose numerator equals $2sv \beta(\beta-2t+1)^2(a_5v+a_6)$, where $a_5$ and $a_6$ are given in (\ref{a5a6}). Setting $r_{d}-r_{c}=0$ yields $a_5v+a_6=0$. The rest of the proof follows exactly as in the first proof of Theorem \ref{Besant}(i) given above and yields $v=\frac{tI}{s\beta}$ and $w=1-\frac{sv}{t}$(and thus $Q_{s,t,v,w}$ is orthodiagonal). Substituting for $v$ and $w$ into $r_{c}$ and $r_{d}$ gives $r_{c}=r_{d}=\frac{v}{s+v}$, and also using $r=\frac{v}{s+v}$ gives $c_1= c_2=\frac{st}{\beta}$ and $d_1=d_2=\frac{t^2}{\beta}$, which implies that $Z_2$(written without complex notation) equals $IP=\left(\frac{st}{\beta},\frac{t^2}{\beta}\right)$. That proves that if one focus of $E_0$ equals $EP$, then the other focus of $E_0$ equals $IP$. Conversely, suppose first that one focus of $E_0$ equals $IP$ and write that focus using complex notation as $Z_1=a+bi$, where $a=\frac{vs}{N+s}$ and $b=\frac{t}{s}a$. The other focus is given by $Z_2=c+di$ for some real numbers $c,d$. Then $Z_1Z_2=a\left(c-\frac{t}{s}d+(d+\frac{t}{s}c)i\right)$ and by Lemma \ref{fociprod}, $ac-\frac{t}{s}ad=\frac{-rsw}{\tau}$ and $ad+\frac{t}{s}ac=\frac{rsv}{\tau}$. Solving for $c$ and $d$ and substituting for $a$ yields $c=c_1$, where
    $c_1=\frac{sN(N+s)r}{v\beta\tau}$ and $d=d_1$, where $d_1=\frac{s(N+s)(sv+tw)r}{v\beta\tau}$. $Z_1+Z_2=2x_0+2y_0i \Rightarrow a+c=2x_0$ and $\frac{t}{s}a+d=2y_0$, which implies that $c=2x_0-a=c_2$, where $c_2=-sv\frac{(s-v)r+v-N-s)}{(N+s)\tau}$ and
    $d=2y_0-\frac{t}{s}a=d_2$, where $d_2=-\frac{(s^2w^2+t^2v^2-s^2+vts-v^2t-2stvw)r+vt(v-N-s)}{(N+s)\tau}$. As done above, setting
    Setting $c_2-c_1=0$ and solving for $r$ yields $r=r_{c}$, a rational function in $s,t,v,w$. Similarly, setting $d_2-d_1=0$ and solving for $r$ yields $r=r_{d}$,also a rational function in $s,t,v,w$. Again we leave out the details. As before, $r_{d}-r_{c}$ is a rational function in $s,t,v,w$ and this time the numerator equals $(sv+tw-t)(N+s)$. Setting $r_{d}-r_{c}=0$ and solving for $w$ yields $w=1-\frac{sv}{t}$ since $N+s \neq 0$. Thus $Q_{s,t,v,w}$ is orthodiagonal and Lemma \ref{orth} then gives $v=\frac{tI}{s\beta}$. Substituting for $v$ and for $w$ into $r_{c}$ and $r_{d}$ gives $r_{c}=r_{d}=\frac{v}{s+v}$, and also using $r=\frac{v}{s+v}$ gives $c_1= c_2=\frac{\beta-t}{2s}$ and $d_1=d_2=\frac{1}{2}$, which implies that $Z_2$(written without complex notation) equals $EP$. \end{proof}
    \textbf{Appendix}
    \begin{proof}By using an isometry of the plane, we can assume that $Q=Q(A_1,A_2,A_3,A_4)$ has vertices \begin{gather}A_1=(0,0,A_2=(0,u),A_3=(s,t),A_4=(v,w),\nonumber\\\text{where}\hspace{0.15cm} s,v,u>0\hspace{0.15cm}\text{and}\hspace{0.15cm} t \geq w. \label{rest}\end{gather}
    To obtain this isometry, first apply a translation so that one of the vertices of Q, say $A_1$(any vertex will do) equals $(0,0)$. Then rotate the segment $S_{1,2}=\overline{A_1A_2}$, (one could also rotate $\overline{A_1A_4}$) about $(0,0)$ so that $A_2=(0,u)$ for some $u>0$. Then $S_{1,2}$ is vertical and the x coordinates of the other two points are either all positive or all negative. This is true since a convex quadrilateral must lie on one side of any of its sides. If the latter case occurs, then do a reflection thru the y axis. Then the vertices of $Q$ have the form given in (\ref{rest}). Finally, consider the map $T(x,y)=\left(\frac{1}{u}x,\frac{1}{u}y\right)$,$ \neq 0$. By Lemma \ref{scaling}, it suffices to assume that $u=1$ and thus we may also assume that $Q=Q_{s,t,v,w}$ for some $s,t,v,w$ satisfying (\ref{R1}) and (\ref{R2}). Let $\Gamma$ be the circle which passes thru the midpoints of the sides, $MP_1-MP_3$, of $Q_{s,t,v,w}$. It is not hard to show that the equation of $\Gamma$ is given by $f(x,y)=0$, where $f(x,y)=(x-c_1)^2+(y-c_2)^2-c_1^2-\left(\frac{1}{2}-c_2\right)^2,
    c_1=\frac{(w-1)t^2+((w-1)^2+v^2+2sv)t-(w-1)s^2}{4(N+s)}$, and $c_2=\frac{t^2v+2( s+v-sw)t-s(v^2+w^2+sv-1)}{4(N+s)}$. Now $f\left(\frac{v}{2},\frac{w}{2}\right)=\frac{1}{2}(t(1-w)-sv)=0 \iff w=1-\frac{sv}{t} \iff Q_{s,t,v,w}$ is orthodiagonal by Proposition \ref{cycorth}(ii). Substituting $w=1-\frac{sv}{t}$ it follows easily that $c_1= \frac{s+v}{4}$ and $c_2=\frac{t^2+2t-sv}{4t}$, which implies that the center of $\Gamma$ is $C_0=\left(\frac{s+v}{4},\frac{t^2+2t-sv}{4t}\right)$. Now by Proposition \ref{orth}, $v$ is given by (\ref{v}). Substituting for $v$ gives $C_0=\left(\frac{(s^2+t^2)^2+t(s^2-t^2)}{4s(s^2+t^2)},\frac{(s^2+3t^2)}{4(s^2+t^2)}\right) = \frac{1}{2}(EP+IP)$, which completes the proof of Proposition \ref{orthomp}.\end{proof}
    

\begin{thebibliography}{11}
    \bibitem{ref1}
    C. Bond, "A New Algorithm for Scan Conversion of a General Ellipse", preprint, http://www.crbond.com/papers/ell\_alg.pdf
    \bibitem{ref2}
    W.H. Besant, Conic Sections Treated Geometrically, George Bell and Sons Educational Catalogue, Project Gutenberg ebook number 29913, 2009
    \bibitem{ref3}
    W.H. Besant, Solutions of Examples in Conic Sections Treated Geometrically, 3rd edition, revised, Cambridge, 1890
    \bibitem{ref4}
    G. D. Chakerian, A Distorted View of Geometry, MAA, Mathematical Plums, Washington, DC, 1979, 130-150.
    \bibitem{ref5}
    Alan Horwitz, Ellipses Inscribed in, and Circumscribed about, Quadrilaterals, Chapman and Hall, 2024, ISBN 9781032622590, 149 pages.
    \bibitem{ref6}
    Alan Horwitz, "Ellipses of maximal area and of minimal eccentricity inscribed in a convex quadrilateral", Australian Journal of Mathematical Analysis and Applications, Volume 2, Issue 1 (2005), 1-12.
    \bibitem{ref7}
    Alan Horwitz, Dynamics of ellipses inscribed in triangles, Journal of Science, Environment and Technology, Volume 5, Issue 1 (2016), 1-21
    \bibitem{ref8}
    Morris Marden, "A note on the zeros of the sections of a partial fraction, Bulletin of the AMS 51 (1945), 935-940.
    \bibitem{ref9}
    Mohamed Ali Said, "Calibration of an Ellipse's Algebraic Equation and Direct Determination of its Parameters", Acta Mathematica Academiae Paedagogicae Ny regyh aziensis Vol.19, No. 2 (2003), 221-225.
    \bibitem{ref10}
    https://en.m.wikipedia.org/wiki/Conic\_section
    \bibitem{ref11}
    https://mathworld.wolfram.com/Ellipse.html
    \end{thebibliography}
    \end{document}